\newtheorem{theorem}{Theorem}
\newtheorem{proposition}{Proposition}
\newtheorem{lemma}{Lemma}
\newtheorem{assumption}{Assumption}
\theoremstyle{remark}
\newtheorem{remark}{Remark}
\newcommand{\my}{\mathbf{y}}
\newcommand{\me}{\mathbf{e}}
\newcommand{\mz}{\mathbf{z}}
\newcommand{\ox}{\overline{x}}
\newcommand{\oy}{\overline{y}}
\newcommand{\oV}{\overline{V}}
\begin{document}

\title{A Flocking-based Approach for Distributed Stochastic Optimization\thanks{To appear in Operations Research. Copyright: \copyright\,2017 INFORMS. The authors gratefully acknowledge partial support from AFOSR (FA9550-15-1-0504) and NSF (1561381).}}
\author{
$\begin{array}{ccc}
\text{Shi Pu}\thanks{Shi Pu is with the Department of Industrial and Systems Engineering, University of Florida, Gainesville, FL 32611-6595, e-mail: shipu@ufl.edu} &  & \text{Alfredo Garcia}\thanks{Alfredo Garcia is with the Department of Industrial and Systems Engineering, Texas A\&M University, College Station, TX 77843-3131, e-mail: {agarcia@ise.ufl.edu}} \\ 
\end{array}
$
}
\maketitle
\abstract{
In recent years, the paradigm of cloud computing has emerged as an architecture for computing that makes use of distributed (networked) computing resources. In this paper, we consider a distributed computing algorithmic scheme for stochastic optimization which relies on modest communication requirements amongst processors and most importantly, does not require synchronization. Specifically,  we analyze a scheme with $N>1$ independent threads 
implementing each a stochastic gradient algorithm. The threads are coupled via a perturbation of the gradient (with attractive and repulsive forces) in a similar manner to mathematical models of flocking, swarming and
other group formations found in nature with mild communication requirements.
When the objective function is convex, we show that a flocking-like approach for distributed stochastic optimization provides a noise reduction effect similar to that of a centralized stochastic gradient algorithm based upon the average of $N$ gradient samples at each step. The distributed nature of flocking makes it an appealing computational alternative. We show that when the overhead related to the time needed to gather $N$ samples and synchronization is not negligible, the flocking implementation outperforms a centralized stochastic gradient algorithm based upon the average of $N$ gradient samples at each step.
When the objective function is not convex, the flocking-based approach seems better suited to escape locally optimal solutions due to the repulsive force which enforces a certain level of diversity in the set of candidate solutions. Here again, we show that the noise reduction effect is similar to that associated to the centralized stochastic gradient algorithm based upon the average of $N$ gradient samples at each step.
}

\section{Introduction}

\label{opt sec:intro}

Swarms, flocks and other group formations can be found in nature in many
organisms ranging from simple bacteria to mammals (see \cite{parrish2002self,okubo1986dynamical,reynolds1987flocks} for references). Such collective and
coordinated behavior is believed to be effective for avoiding predators and/or for
increasing the chances of finding food (foraging) (see \cite{grunbaum1998schooling,pu2016noise}). In this paper
we introduce a novel distributed scheme for stochastic optimization wherein multiple independent computing threads implement each a stochastic gradient algorithm  which is further perturbed by repulsive and attractive terms (a function of the relative
distance between solutions). Thus, the updating of individual solutions
is coupled in a similar manner to mathematical models of swarming, flocking and
other group formations found in nature (see \cite{gazi2011swarm}).
We show that this coupling endows the flocking scheme with an important robustness
property as noise
realizations that induce trajectories differing too much from the group
average are likely to be discarded.

The performance of the single-thread stochastic gradient algorithm is highly sensitive to noise. Thus, there is a literature on estimation techniques leading to better gradient estimation often involving increasing sample size (see \cite{fu2015stochastic,spall2005introduction} for a survey of gradient
estimation techniques). When sampling is undertaken in parallel, synchronization is needed to execute the tasks that can not be executed in parallel.
The speed-up obtained by parallel sampling and {\em centralized} gradient estimation is limited by overhead related to {\em (i)} time spent gathering samples (which could be significant for example in the simulation of complex systems) and {\em (ii)} synchronization. In contrast, the noise reduction obtained in a flocking-based approach with $N>1$ threads does not require synchronization since each thread only needs the information on the current solution identified by neighboring threads (where the notion of neighborhood is related to a given network topology).
When sampling times are not negligible and exhibit large variation, synchronization may cause significant overhead so that  {\em real-time} performance of stochastic gradient algorithm based upon the average of $N$ samples obtained in parallel is highly affected by large sampling time variability. In contrast, the {\em real-time} performance of a flocking-based implementation with $N>1$ threads may be superior as each thread can asynchronously update its solution based upon a  {\em small} sample size and {\em still} reap the benefits of noise reduction stemming from the flocking discipline.

To illustrate the noise reduction property, consider the minimization of the function $f(x)=\ln(\|x\|^2+1)$ where $x \in \mathbb{R}^2$. The unique optimal solution is $x^*=(0,0)$. Suppose that the gradient $\nabla f(x)$ is observed with noise so that the basic iteration in a stochastic gradient descent algorithm can be written as:
\begin{equation*}
x({k+1})=x({k})+\Gamma(k)(-\nabla f(x({k}))+\varepsilon({k})),
\end{equation*}
where  $\Gamma({k})>0$ is the step size, and the collection $\{\varepsilon({k}):k>0\}$ is i.i.d. (independent and identically distributed) with mean zero and variance $\sigma^2$.
The stochastic gradient method with constant step size $\Gamma(k)=0.02$ and normally distributed noise with $\sigma^2=450$ is unable to approximate the optimal solution given the large magnitude of noise (relative to the gradient).
One solution to this conundrum is to implement an improved version based upon the average of $N=10$ samples of the gradient at each step. 
Then the variance of noise is reduced to ${\sigma}^2/N=45$. Supposing that each step takes $0.02s$, the performance of this approach is shown in Figure \ref{opt fig: illustration}(a)\footnote{In this illustration example we assume zero variance among sampling times.}.

In this paper we advocate a different tack. We introduce an additional perturbation to the gradient so that the basic iteration for thread $i$ is:
\begin{equation*}
x({i,k+1})=x({i,k})+\tilde{\Gamma}({i,k})\left[-\nabla f(x({i,k}))+\varepsilon ({i,k})-\sum_{j=1,j\neq
	i}^{N}\alpha_{ij}\nabla_{x(i,k)} J(\|x({i,k})-x({j_i,k})\|)\right],
\end{equation*}
where $ J(\|x({i,k})-x(j_i,k)\|)$ represents the {\em flocking} potential between threads $i$ and $j$. $\alpha_{ij}=1$ if thread $i$ has access to the current solution $x({j_i,k})$ identified by thread $j$ and $\alpha_{ij}=0$ otherwise.
The term $\nabla_{x(i,k)} J(\|x({i,k})-x({j_i,k})\|)$ is a combination of \emph{repulsive} and \emph{attractive} ``forces" depending upon the relative distance $\|x({i,k})-x({j_i,k})\|$ (see \cite{gazi2003stability} for reference). The performance of the flocking-based scheme is measured by the average solution of all threads.

Figure \ref{opt fig: illustration}(b) depicts the performance of the flocking-discipline of $N=10$ fully connected threads (again with constant step size $\tilde{\Gamma}({i,k})=0.02$ and noise variance $\sigma^2=450$).
It can be seen to be comparable with the scheme based upon the average of $N=10$ samples at each step.
The times needed for the solutions identified by each scheme to reach the ball $B_{0.1}(x^*)=\{x\in \mathbb{R}^2~|~ \|x-x^*\|\ < 0.1\}$ are fairly similar \footnote{$x^*$ denotes the optimal solution. For a total of $100$ sample paths
	the mean time and standard deviation of the centralized scheme and the flocking scheme are $(992.6, 1454.6)$ and $(969.1, 1223.4)$, respectively.}.

This noise reduction effect can be succinctly explained as follows. Under a flocking discipline, noise
realizations that induce trajectories differing too much from the group
average are likely to be discarded because of the attractive potential effect on each individual thread which leads to cohesion. The noise reduction enabled by a flocking-discipline is fundamentally different from that associated with the averaging of independent gradient samples. 

\begin{figure}[htbp]
	\centering
	\subfigure[Sample paths for sample average scheme with $N$ samples.]{\includegraphics[width=3.0in]{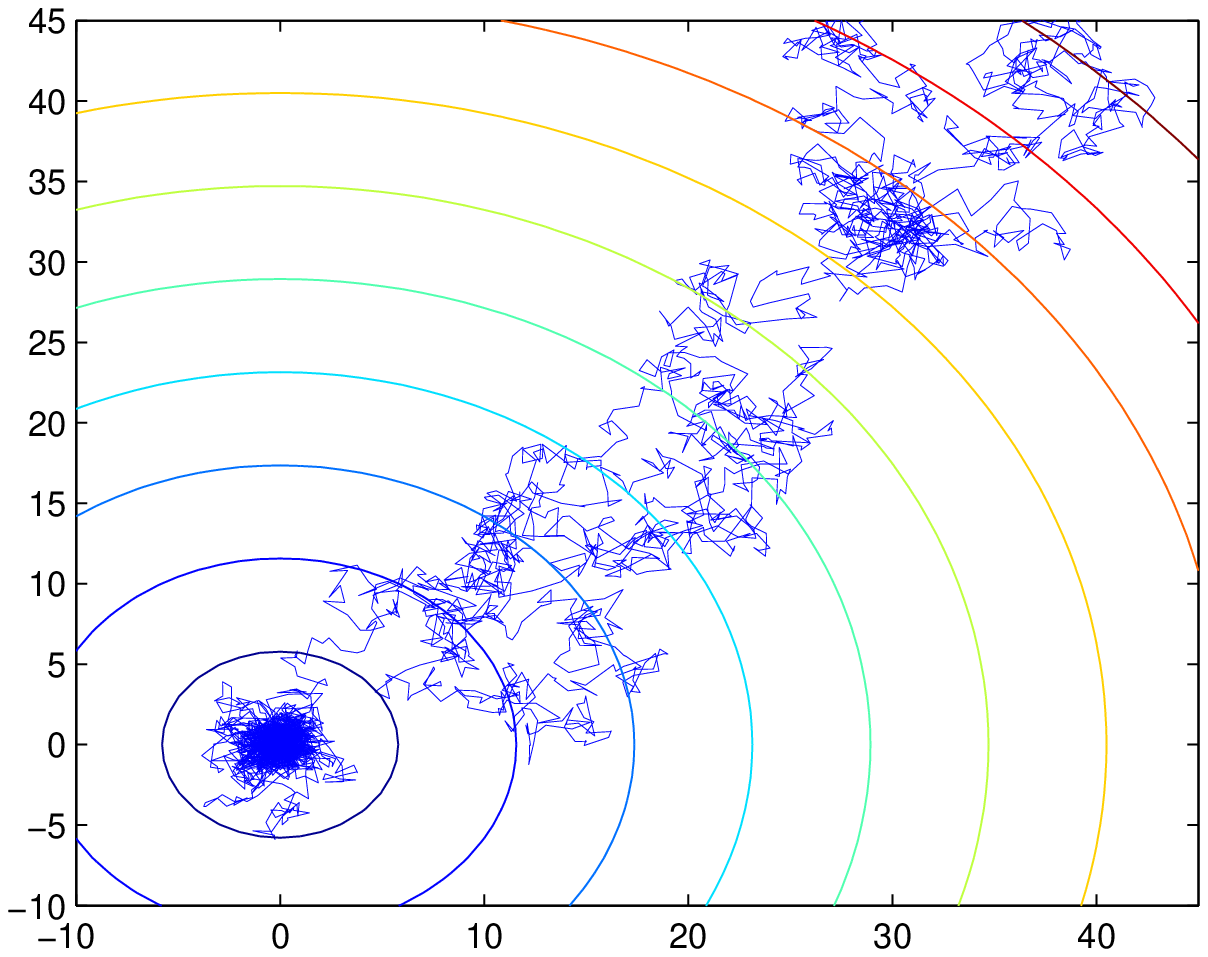}} 
	\subfigure[Sample paths with flocking discipline ($N$ threads).]{\includegraphics[width=3.0in]{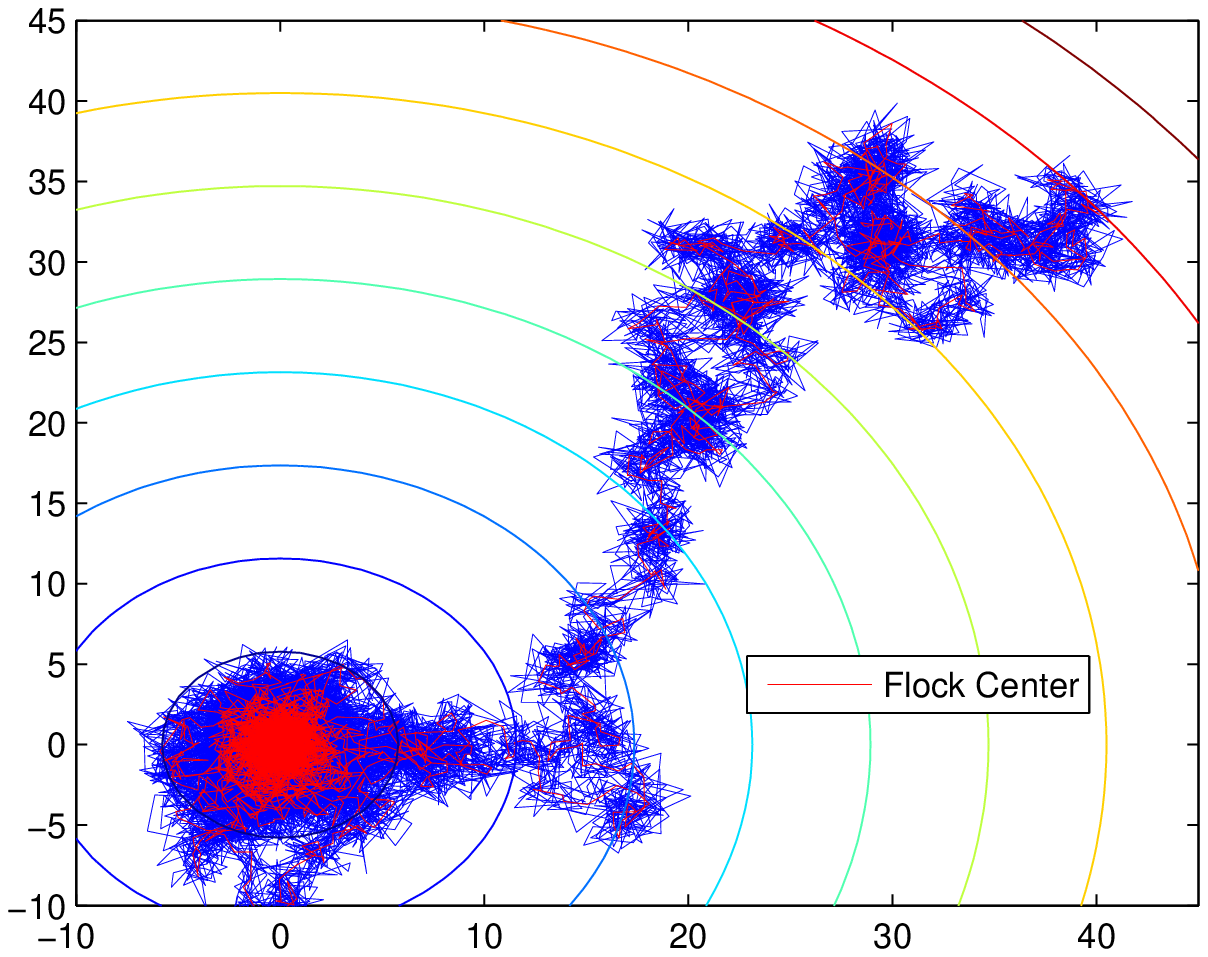}} 
	\subfigure[Distance to optimum for sample average scheme with $N$ samples.]{\includegraphics[width=3.0in]{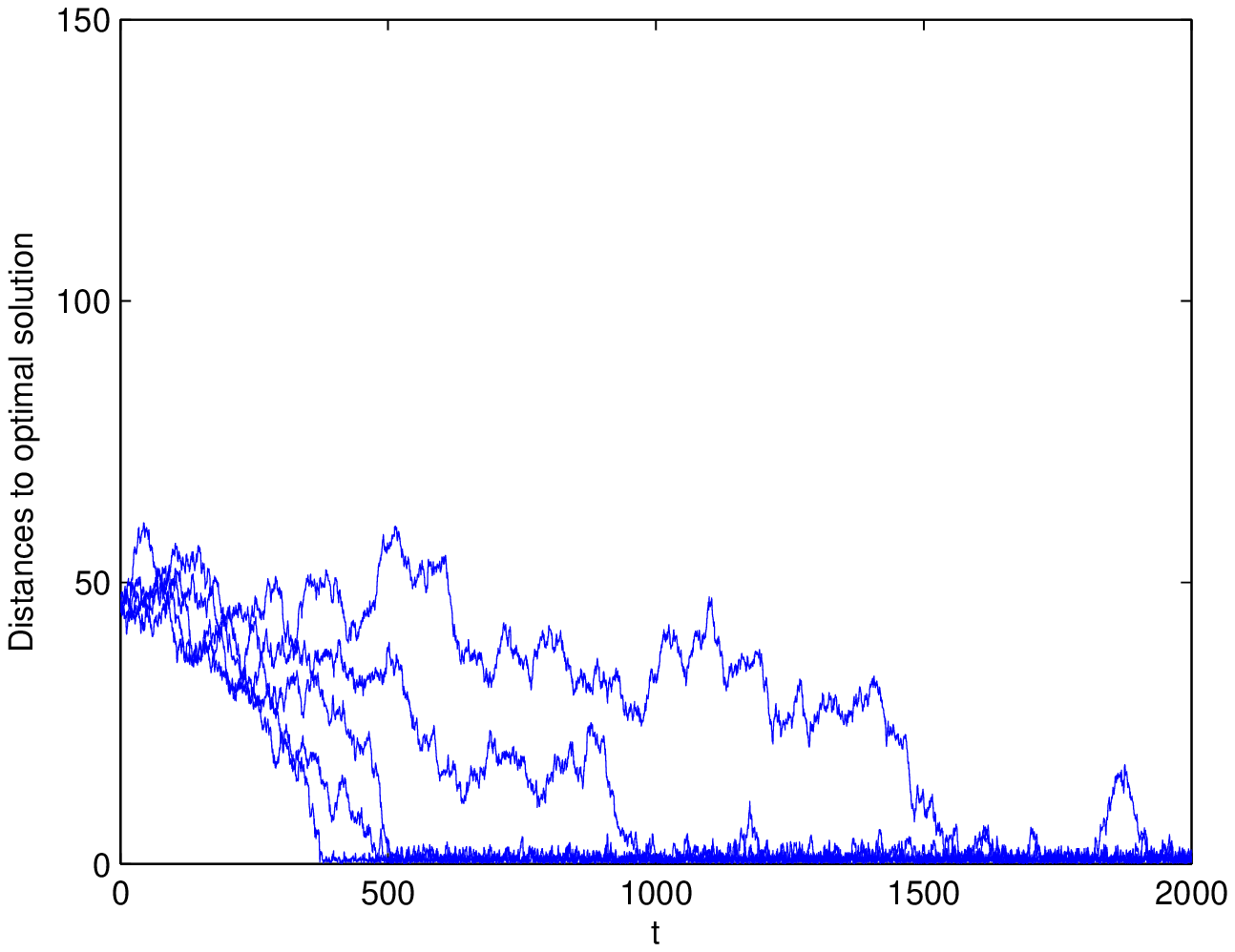}}
	\subfigure[Distance between the average flocking solution and the optimum.]{\includegraphics[width=3.0in]{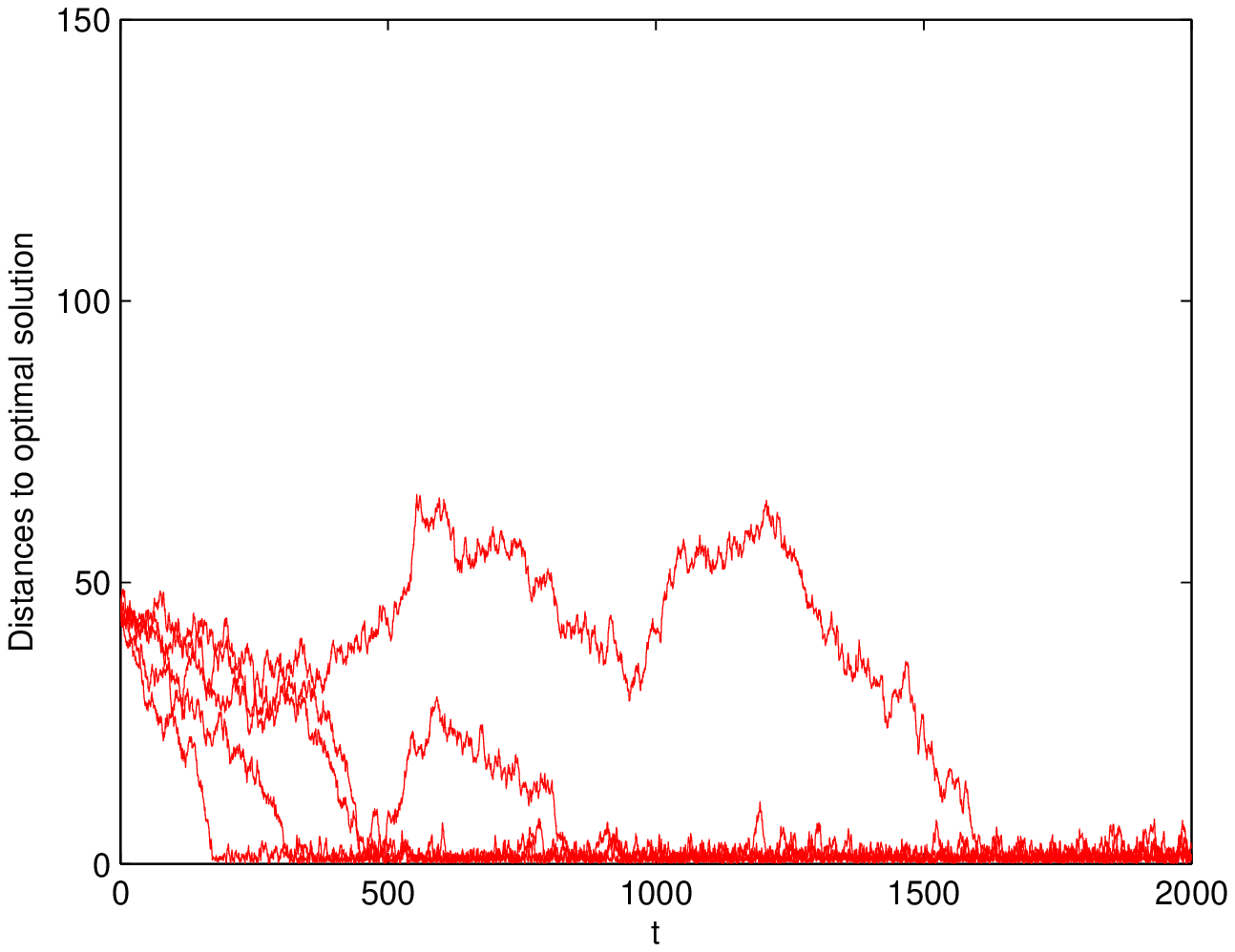}} 
	\caption{Performance comparison between the scheme based upon the average of $N=10$ gradient samples per step and the flocking-based approach with $10$ fully connected threads.}
	\label{opt fig: illustration}
\end{figure}

Our work is related to the extensive literature in stochastic approximation method dating to \cite{robbins1951stochastic} and \cite{kiefer1952stochastic}. These work includes the analysis of convergence (conditions for convergence, rates of convergence, proper choice of step size) in the context of diverse noise models (see \cite{kushner2003stochastic}). 
Recently there has been considerable interest in parallel or distributed implementation of stochastic gradient algorithms (see \cite{cavalcante2013distributed,towfic2014adaptive,lobel2011distributed,srivastava2011distributed,wang2015cooperative} for examples). However, they mainly aim at minimizing a sum of convex functions which is different from our objective.


Our work is also linked with population-based algorithms for simulation-based
optimization. In these approaches, at every iteration, the quality of each
solution in the population is evaluated and a new population of solutions is
randomly generated according to a given rule designed to achieve an
acceptable trade-off between \textquotedblleft exploration\textquotedblright\
and \textquotedblleft exploitation\textquotedblright\ effort. Recent efforts
have focused on model-based algorithms (see \cite{hu2007model}) which differ
from population-based approaches in that candidate solutions are generated
at each iteration by sampling from a \textquotedblleft
model\textquotedblright\ which is a probability distribution over the
solution space. The basic idea is to modify the model based on the sampled
solutions in order to bias the future search towards regions containing high
quality solutions (see \cite{hu2015model} for a recent survey). These approaches are inherently
{\em centralized} in that the updating of populations (or models) is undertaken
after the quality of {\em all} candidate solutions is evaluated.

The structure of this paper is as follows. Section \ref{opt sec: set} introduces the optimization problem of interest. In Section \ref{opt sec:cohesion}, we perform cohesion analysis of the flocking-based approach with respect to the solutions identified by different threads. Section \ref{opt sec: increased convex} formalizes the noise reduction properties of the flocking-based algorithmic scheme for convex optimization. In Section \ref{opt sec: nonconvex}, we apply the flocking-based algorithm  to the optimization of general non-convex functions.  Section \ref{opt sec: conclusion} concludes the paper.

\section{Setup}
\label{opt sec: set}
\subsection{Preliminaries}
\label{opt subsec:graph}
In the analysis of this paper we shall make use of certain graph theoretic concepts which we briefly review below.
A graph $\mathcal{G}$ is a pair $(\mathcal{V},\mathcal{E})$, where $\mathcal{V}$ is a set of vertices and $\mathcal{E}$ is a subset of $\mathcal{V}\times \mathcal{V}$ called edges. A graph $(\mathcal{V},\mathcal{E})$ is called undirected if $(i,j)\in \mathcal{E}\Leftrightarrow (i,j)\in \mathcal{E}$. The adjacency matrix $A=[\alpha_{ij}]\in \mathbb{R}$ of a graph is a matrix with nonzero elements satisfying the property $\alpha_{ij}>0\Leftrightarrow (i,j)\in \mathcal{E}$. Self-joining edges are excluded, i.e., $\alpha_{ii}=0,\forall i$. 
%
%
The Laplacian matrix $L$ associated with a graph $(\mathcal{V},\mathcal{E})$ is defined as $L=[l_{ij}]$, where $l_{ii}=\sum_j \alpha_{ij}$ and $l_{ij}=-\alpha_{ij}$ where $i\neq j$. For an undirected graph, the Laplacian matrix is symmetric positive semi-definite (see \cite{godsil2013algebraic}).
\subsection{Problem Statement}

We consider the problem%
\begin{equation}
\min_{x\in \mathbb{R}^{m}}f(x)  \label{opt Problem_def}
\end{equation}%
where $f:\mathbb{R}^{m}\rightarrow \mathbb{R}$ is a differentiable function
that is not available in closed form.
To solve this problem, a black-box noisy simulation model is used. In this
context, noise can have many sources such as modeling and discretization
error, incomplete convergence, and finite sample size for Monte-Carlo
methods (see for instance \cite{kleijnen2008design}). Assume that we have $N$ computing threads that can generate gradient samples in parallel. 
Every gradient sample  is subject to i.i.d. noise $\varepsilon\in \mathbb{R}^m$ of mean zero and variance $\sigma ^{2}$ in each dimension.

In the rest of this section, we present two algorithms for solving the problem. First, we introduce a centralized stochastic gradient-descent algorithm. Then we propose the flocking-based approach. 
In both cases, we assume the step size is a constant value ($\Gamma$ and $\tilde{\Gamma}$, respectively).

\subsubsection{A Centralized Algorithm}
A {\em centralized} stochastic gradient-descent algorithm is of the form:%
\begin{equation}
x(k+1)=x(k)+\Gamma u(k), \ \ k\in \mathbb{N} \label{opt eq:x_k centralized}
\end{equation}%
where 
$u(k)=-\nabla f(x(k))+\epsilon (k)$, with $\epsilon (k)$ being the random simulation noise. Assume that sampling takes place through $N$ parallel computing threads, where each thread contributes one sample in a single step. Then, $\epsilon (k)$ is given by an average of $N$ i.i.d. random vectors: $\epsilon (k)= (1/N)\sum_{i=1}^{N}\varepsilon (i)$. Each $\varepsilon (i)$ has $m$ i.i.d. components of mean zero and variance $\sigma ^{2}$.

In what follows, we approximate the discrete-time process (\ref{opt eq:x_k centralized}) by a continuous-time system for ease of analysis. 
Let $\Delta t(k)$ be the time needed to gather $N$ samples for calculating $u(k)$. Then solution $x(k)$ is obtained at $t(k)=\sum_{l<k}\Delta t(l)$ in continuous-time. Denoting by $x_t$ the identified solution at time $t$, we have $x_{t(k)}=x(k)$
and $x_t=x(k-1)$ for all $t\in(t(k-1),t(k))$, i.e., the continuous-time solution changes discretely.
By scheme (\ref{opt eq:x_k centralized}),
\begin{align*}
x_{t}=x_0-\sum_{t(l)<t}\nabla f(x_{t(l)})\Gamma+\sum_{t(l)<t}\epsilon(l)\Gamma.
\end{align*}
Define a new variable $y_t:=x_{t/\Gamma}$. It follows that
\begin{align*}
y_{t}=  x_0-\sum_{t(l)<t/\Gamma}\nabla f(x_{t(l)})\Gamma+\sum_{t(l)<t/\Gamma}\epsilon(l)\Gamma= y_0-\sum_{\Gamma t(l)<t}\nabla f(y_{\Gamma t(l)})\Gamma+\sum_{t(l)<t/\Gamma}\epsilon(l)\Gamma.
\end{align*}
Assume that all $\Delta t(k)$'s are i.i.d. with mean $\mathbb{E}[\Delta t(k)]=\Delta t$, and let $n_t$ be the cardinality of $\{l: \Gamma t(l)<t\}$. By the (strong) law of large numbers, for small $\Gamma>0$,
\begin{align}
\frac{\Gamma n_t}{t}=\frac{n_t}{t/\Gamma}\simeq \frac{1}{\Delta t},\; \text{and} \; \frac{n_t}{t}\gg 0.
\label{opt n_t}
\end{align}
Hence,
\begin{align*}
\sum_{\Gamma t(l)<t}\nabla f(y_{\Gamma t(l)})\Gamma=\frac{\Gamma n_t}{t}\left[\sum_{\Gamma t(l)<t}\nabla f(y_{\Gamma t(l)})\frac{t}{n_t}\right]\simeq \frac{\Gamma n_t}{t}\int_{0}^{t}\nabla f(y_t)dt\simeq \frac{1}{\Delta t}\int_{0}^{t}\nabla f(y_t)dt.
\end{align*}

Note that
$\sum_{t(l)<t/\Gamma}\epsilon(l)\Gamma$ has mean zero and variance $\Gamma^2 n_t\sigma^2/N $ in each dimension. In light of (\ref{opt n_t}),
\begin{align*}
\frac{\Gamma^2 n_t\sigma^2}{N} =\frac{\Gamma n_t}{t}\Delta t\frac{\Gamma\sigma^2 t}{N\Delta t}\simeq \frac{\Gamma\sigma^2}{N\Delta t} t.
\end{align*}
We have
\begin{align*}
\sum_{t(l)<t/\Gamma}\epsilon(l)\Gamma\simeq \sigma\sqrt{\frac{\Gamma}{N\Delta t}}B_t,
\end{align*}
where $B_{t}$ is the standard $m$-dimensional Brownian motion.

Define $\gamma=1/\Delta t$ and $\tau_N=\sigma\sqrt{{\Gamma \Delta t}/{N }}$.
Then $y_t$ approximately satisfies the following stochastic Ito integral:
\begin{align*}
y_t=y_0-\gamma\int_{0}^{t}\nabla f(y_t)dt+\int_{0}^{t} \tau_N\gamma dB_t.
\end{align*}
which is usually written in its differential form:
\begin{equation}
dy_{t}=-\nabla f(y_t) \gamma dt+\tau_N\gamma dB_t.  \label{opt eq:dy_t centralized}
\end{equation}%


\subsubsection{A Flocking-based Algorithm}
A {\em flocking-based} implementation also has $N$ computing threads. In contrast to the centralized approach, each thread $i$ independently implement a  stochastic gradient algorithm based on only {\em one} sample at each step:
\begin{equation}
x(i,k+1)=x(i,k)+\tilde{\Gamma} u(i,k), \ \ k\in \mathbb{N}
\label{opt eq: x(i,k)}
\end{equation}
where
\begin{align*}
u(i,k)=-\nabla f(x(i,k))+\varepsilon ({i,k})-\sum_{j=1,j\neq
	i}^{N}\alpha_{ij}\nabla_{x(i,k)} J(\|x(i,k)-x(j_i,k)\|).
\end{align*}
Here $x(j_i,k)$ denotes the current solution of thread $j\neq i$ at the time of thread $i$'s update, and
noise term $\varepsilon ({i,k})$ comes from one sampling. Thus each $\varepsilon ({i,k})$ is i.i.d. with $m$ i.i.d. components of mean zero and variance $\sigma^2$. 

The additional term $-\sum_{j=1,j\neq
	i}^{N}\alpha_{ij}\nabla_{x_i} J(\|x(i,k)-x(j_i,k)\|)$ represents the function of mutual attraction and repulsion between individual threads (see \cite{gazi2004class} for reference).
$A=[\alpha_{ij}]\in \mathbb{R}^{N\times N}$ is the coupling matrix with $\alpha_{ij}\in\{0,1\}$. $\alpha_{ij}=1$ indicates that thread $i$ is informed of the solution identified by threads $j$. We assume that the corresponding graph $\mathcal{G}$ is undirected ($A=A^T$) and connected. 

Denote with $\Delta t({i,k})$ the time needed by thread $i$ to gather one sample for $u(i,k)$, then $x({i,k})$ is obtained at time $t(i,k)=\sum_{l<k}\Delta t({i,l})$. Let $x_{i,t}$ be the solution of thread $i$ at time $t$. It satisfies $x_{t(i,k)}=x(i,k)$ and $x_{i,t}=x(i,k-1)$ for all $t\in(t(i,k-1),t(i,k))$.  
The scheme can be written as follows: for each thread $i\in \{1,\ldots ,N\}$,
\begin{equation*}
x_{i,t(i,k+1)}=x_{i,t(i,k)}+\tilde{\Gamma}\left[-\nabla f(x_{i,t(i,k)})+\varepsilon ({i,k})-\sum_{j=1,j\neq
	i}^{N}\alpha_{ij}\nabla_{x_{i,t(i,k)}} J(\|x_{i,t(i,k)}-x_{j,t(i,k)}\|)\right].
\end{equation*}%

Define function $g(\cdot)$ as $g(x)=-\nabla_{x} J(\|x\|)$. Let $y_{i,t}=x_{i,t/\tilde{\Gamma}}$ and assume all $\Delta t({i,k})$'s are i.i.d. with $\mathbb{E}[\Delta t({i,k})]=\tilde{\Delta} t$. Similar to (\ref{opt eq:dy_t centralized}), for small $\tilde{\Gamma}>0$ the dynamics of $y_{i,t}$ can be approximated by%
\begin{equation}
dy_{i,t}=\left[-\nabla f(y_{i,t})+\sum_{j=1,j\neq i}^N \alpha_{ij}g(y_{i,t}-y_{j,t})\right] \tilde{\gamma}dt+\tau\tilde{\gamma}dB_{i,t}, \label{opt eq: dy_it}
\end{equation}
where $\tilde{\gamma}=1/\tilde{\Delta} t$ and $\tau=\sigma\sqrt{{\tilde{\Gamma} \tilde{\Delta} t}}$.

In this paper, we characterize the performance of the flocking-based approach using the average solution $\bar{x}_{t}=({1}/{N})\sum\nolimits_{i=1}^{N}x_{i,t}$.

\begin{remark}
	In what follows, we shall use the same specification for $g(\cdot )$ as in \cite{gazi2004class}, i.e., $g(\cdot )$ is an odd function of
	the form: 
	\begin{equation}  \label{opt eq: g()}
	g(x)=-x[g_{a}(\Vert x\Vert )-g_{r}(\Vert x\Vert )],
	\end{equation}%
	where $g_{a}:\mathbb{R}^{+}\rightarrow \mathbb{R}^{+}$ represents (the
	magnitude of) the attraction term and it has long range, whereas $g_{r}:%
	\mathbb{R}^{+}\rightarrow \mathbb{R}^{+}$ represents (the magnitude of) the
	repulsion term and it has short range, and $\Vert \cdot \Vert $ is the
	Euclidean norm. 
	We assume that $J(\cdot)$ has a unique minimizer, and there is an equilibrium distance $\rho >0$ such that $g_{a}(\rho )=g_{r}(\rho )$, and for $\Vert x\Vert >\rho $ we have $g_{a}(\Vert x\Vert )>g_{r}(\Vert x\Vert )$, and for $\Vert x\Vert <\rho $ we have $g_{a}(\Vert x\Vert )<g_{r}(\Vert x\Vert )$.  
	In this work we	consider linear attraction functions, i.e., $g_{a}(\Vert x\Vert )=a$ for
	some $a>0$ and all $\Vert x\Vert $, and repulsion functions satisfying $%
	g_{r}(\Vert x\Vert )\Vert x\Vert ^{2}\leq b$ uniformly for some $b>0$.
	
	The choice of parameters $a$ (i.e. attraction) and $b$ (i.e. repulsion) reflects the emphasis on exploration (higher values of $b$) versus exploitation (higher values of $a$).  The potential function is reminiscent of penalty function methods for constrained optimization in which the gradient of the objective function is perturbed so as to ensure updated solutions remain within the feasible region. 
	The difference is that in the flocking approach, potential-induced attraction/repulsion forces keep the updated solutions in a moving ball with fixed size rather than a rigid region. In light of its functionality, the analysis would not change much if we had adopted a different potential function.
	
\end{remark}

\section{Analysis}
\label{opt sec:cohesion}

In this section we study the stochastic processes $\{y_{i,t}:t\ge 0\}$ associated with each one of the $N>1$ threads in the flocking-based approach. 
The average solution $\bar{y}_{t}=({1}/{N})\sum\nolimits_{i=1}^{N}y_{i,t}$ will be of particular importance in characterizing the performance of the flocking-based approach. 
This part of the analysis pertains to a characterization of cohesiveness of the solutions identified by the different threads. To this end, we will analyze the process $\{\overline{V}_{t}:t>0\}$ defined as
\[
\overline{V}_t=\frac{1}{N}\sum_{i=1}^{N}\frac{1}{2}\Vert y_{i,t}-\bar{y}_{t} \Vert ^{2}.
\]

In the analysis, we will frequently make use of Ito's Lemma as stated below (see \cite{oksendal2003stochastic}).
	\begin{lemma}
		(Ito's lemma) Let 
		\[dX_t=udt+vdB_t\]
		be an $n$-dimensional Ito process . Let $g_{t,x}$ be a twice differentiable map from $[0,\infty)\times\mathbb{R}^n$ into $\mathbb{R}$. Then the process
		\[Y_t=g_{t,X_t}\]
		is again  an Ito process with
		\[dY_t=\frac{\partial g}{\partial t}dt+\sum_i \frac{\partial g}{\partial x_i}dX_i+\frac{1}{2}\sum_{i,j}\frac{\partial^2 g}{\partial x_i\partial x_j}dX_idX_j\]
		where $dB_idB_j=\delta_{ij}dt$  ($\delta_{ij}=1$ if $i=j$ and $\delta_{ij}=0$ otherwise), $dB_idt=dtdB_i=0$.
	\end{lemma}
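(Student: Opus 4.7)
The plan is to prove Ito's lemma by the standard partition-and-Taylor-expand strategy, reducing first to a tractable bounded setting and then identifying the extra second-order term that distinguishes stochastic calculus from ordinary calculus.

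First, I would reduce to the case where $g$, its partial derivatives up through second order, and the coefficients $u$, $v$ are uniformly bounded. This is a routine localization step: introduce stopping times $\tau_n = \inf\{t : |X_t| \ge n \text{ or } |u_t| + |v_t| \ge n\}$, prove the formula on $[0, t\wedge \tau_n]$, and let $n \to \infty$ using continuity of sample paths. Once bounded, fix $t > 0$ and a partition $0 = t_0 < t_1 < \cdots < t_M = t$ of mesh $\delta \to 0$. Write the telescoping identity $g(t, X_t) - g(0, X_0) = \sum_{k=0}^{M-1} [g(t_{k+1}, X_{t_{k+1}}) - g(t_k, X_{t_k})]$ and apply Taylor's theorem to each increment, expanding to second order in $x$ and first order in $t$, with a pointwise remainder.

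The first-order terms give $\sum_k \partial_t g \,\Delta t_k$ and $\sum_k \sum_i \partial_{x_i} g \,\Delta X_{i,k}$, which converge to $\int_0^t \partial_t g \, ds$ and $\sum_i \int_0^t \partial_{x_i} g \, dX_{i,s}$ respectively (the latter being a combination of a Lebesgue integral against $u\,ds$ and an Ito integral against $v\,dB_s$, both standard limits for simple integrands). The crucial step is the second-order term $\tfrac{1}{2}\sum_k \sum_{i,j} \partial^2_{x_i x_j} g \cdot \Delta X_{i,k}\,\Delta X_{j,k}$. Substituting $\Delta X_{i,k} \approx u_i\,\Delta t_k + v_i\,\Delta B_{i,k}$, the contributions of the form $\Delta t_k \cdot \Delta t_k$ and $\Delta t_k \cdot \Delta B_{j,k}$ vanish as $\delta \to 0$, leaving only the $v_i v_j \Delta B_{i,k}\Delta B_{j,k}$ pieces.

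The main obstacle, and the heart of the proof, is showing that
\[
\sum_k \partial^2_{x_i x_j} g(t_k, X_{t_k})\, v_i(t_k) v_j(t_k)\, \Delta B_{i,k}\Delta B_{j,k} \;\longrightarrow\; \int_0^t \partial^2_{x_i x_j} g \cdot v_i v_j \, \delta_{ij}\, ds
\]
in $L^2$. This is the quadratic variation identity for Brownian motion: for independent coordinates $B_i, B_j$ with $i \ne j$, $\sum_k \Delta B_{i,k}\Delta B_{j,k} \to 0$ in $L^2$; for $i = j$, $\sum_k (\Delta B_{i,k})^2 \to t$ in $L^2$, and more generally $\sum_k h(t_k)(\Delta B_{i,k})^2 \to \int_0^t h\,ds$ for adapted bounded $h$. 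I would establish this by a direct variance computation exploiting independence of Brownian increments and the identities $\mathbb{E}[(\Delta B)^2] = \Delta t$, $\mathrm{Var}((\Delta B)^2) = 2(\Delta t)^2$. Finally, I would verify that the Taylor remainder terms $o(|\Delta t_k|^2 + |\Delta X_k|^2)$ vanish in $L^1$ as $\delta \to 0$, using uniform continuity of the second derivatives of $g$ on the bounded domain and the quadratic variation bounds just established. Combining all three limits and removing the localization via $\tau_n \to \infty$ yields the formula as stated.
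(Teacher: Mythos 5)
The paper does not prove this statement at all: it is quoted as a background lemma with a citation to \cite{oksendal2003stochastic}, so there is no in-paper argument to compare against. Your outline is the standard proof found in that reference --- localization by stopping times, telescoping over a partition, second-order Taylor expansion, convergence of the first-order sums to the $ds$ and It\^{o} integrals, the $L^2$ quadratic-variation computation giving $\sum_k(\Delta B_{i,k})^2\to t$ and vanishing cross terms, and control of the remainder --- and it is correct as a proof plan. The only step you gloss over is that the substitution $\Delta X_{i,k}\approx u_i\,\Delta t_k+v_i\,\Delta B_{i,k}$ with coefficients frozen at $t_k$ is exact only for elementary (step) integrands; for general adapted $u,v$ one first proves the formula for elementary approximations and then passes to the limit in $L^2$, a routine but necessary additional layer in the full argument.
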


\subsection{Preliminaries}

We consider the stochastic differential equation
governing $\overline{V}_t$. Let $V_{i,t}=({1}/{2})\Vert e_{i,t}\Vert ^{2}$ with $e_{i,t}=y_{i,t}-\bar{y}_{t}$. We have
$ \overline{V}_t=({1}/{N})\sum_{i=1}^{N}V_{i,t}$.
Applying Ito's lemma, 
\begin{equation*}
d{V_{i,t}}=d{e_{i,t}}\cdot e_{i,t}+\frac{1}{2}de_{i,t}\cdot de_{i,t},
\end{equation*}
where $d{e_{i,t}}=d{y}_{i,t}-d{\overline{y}_{t}}$. 

\begin{lemma}
	\label{opt lemma1}
	Suppose relation (\ref{opt eq: dy_it}) holds, and assume a linear attraction function $g_{a}(\Vert x\Vert )=a$. Then $\overline{V}_t$ satisfies
	\begin{align} \label{opt eq:doV}
	d\overline{V}_t 
	= & -\frac{a}{N}\sum_{i=1}^{N}\sum_{j=1,j\neq
		i}^{N}\alpha_{ij}(e_{i,t}-e_{j,t})^T e_{i,t}\tilde{\gamma}dt+ \frac{1}{2N}\sum_{i=1}^{N}\sum_{j=1,j\neq i}^{N}\alpha_{ij}g_{r}(\Vert
	y_{i,t}-y_{j,t}\Vert )\Vert y_{i,t}-y_{j,t} \Vert ^2 \tilde{\gamma}dt \notag\\
	& -\frac{1}{N}%
	\sum_{i=1}^N \nabla^T f(y_{i,t}) e_{i,t} \tilde{\gamma}dt + \frac{\tau}{N} \tilde{\gamma}\sum_{i=1}^N {dB_{i,t}}^T e_{i,t}+\frac{m\tau^2 \tilde{\gamma}^2(N-1)%
	}{2N}dt. 
	\end{align}
\end{lemma}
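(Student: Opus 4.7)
The plan is to apply Ito's lemma to $V_{i,t}=\tfrac{1}{2}\|e_{i,t}\|^2$, sum over $i$, and then carefully exploit two structural features: the oddness of $g$ together with the symmetry $\alpha_{ij}=\alpha_{ji}$, and the identity $\sum_i e_{i,t}=0$.

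First I would compute $d\bar y_t$ by averaging equation (\ref{opt eq: dy_it}). The gradient contribution becomes $-\tfrac{1}{N}\sum_k\nabla f(y_{k,t})\tilde\gamma\,dt$, the Brownian contribution becomes $\tfrac{\tau\tilde\gamma}{N}\sum_k dB_{k,t}$, and crucially the coupling term $\tfrac{1}{N}\sum_i\sum_{j\neq i}\alpha_{ij}g(y_{i,t}-y_{j,t})$ vanishes: swapping $(i,j)$ in the double sum and using $g(-x)=-g(x)$ together with $\alpha_{ij}=\alpha_{ji}$ shows the sum equals its own negative. Subtracting gives
\begin{align*}
de_{i,t}=\Bigl[-\nabla f(y_{i,t})+\tfrac{1}{N}\sum_k\nabla f(y_{k,t})+\sum_{j\neq i}\alpha_{ij}g(y_{i,t}-y_{j,t})\Bigr]\tilde\gamma\,dt+\tau\tilde\gamma\bigl(dB_{i,t}-\tfrac{1}{N}\sum_k dB_{k,t}\bigr).
\end{align*}

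Next, Ito's lemma yields $dV_{i,t}=e_{i,t}^T de_{i,t}+\tfrac{1}{2}de_{i,t}\cdot de_{i,t}$. For the quadratic variation, only the Brownian piece survives: using independence of $B_{i}$ and $B_{k}$ for $i\neq k$, one gets $de_{i,t}\cdot de_{i,t}=\tau^2\tilde\gamma^2\tfrac{N-1}{N}m\,dt$, and averaging over $i$ recovers the final $\tfrac{m\tau^2\tilde\gamma^2(N-1)}{2N}dt$ term. For the drift part, I would split $\tfrac{1}{N}\sum_i e_{i,t}^T de_{i,t}$ into three pieces. The piece coming from $\tfrac{1}{N}\sum_k\nabla f(y_{k,t})$ collapses to zero because $\sum_i e_{i,t}=0$, leaving $-\tfrac{1}{N}\sum_i\nabla^T f(y_{i,t})e_{i,t}\tilde\gamma\,dt$. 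For the Brownian-drift product with $e_{i,t}$, the $\tfrac{1}{N}\sum_k dB_{k,t}$ correction again drops by the same zero-sum identity, leaving $\tfrac{\tau\tilde\gamma}{N}\sum_i dB_{i,t}^T e_{i,t}$.

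The subtlest step is the coupling drift $\tfrac{1}{N}\sum_i\sum_{j\neq i}\alpha_{ij}g(y_{i,t}-y_{j,t})^T e_{i,t}$. Here I would substitute the specification $g(x)=-x[a-g_r(\|x\|)]$ and use $y_{i,t}-y_{j,t}=e_{i,t}-e_{j,t}$ to get the attraction contribution $-\tfrac{a}{N}\sum_i\sum_{j\neq i}\alpha_{ij}(e_{i,t}-e_{j,t})^T e_{i,t}\tilde\gamma\,dt$ directly. For the repulsion piece $\tfrac{1}{N}\sum_i\sum_{j\neq i}\alpha_{ij}g_r(\|y_{i,t}-y_{j,t}\|)(y_{i,t}-y_{j,t})^T e_{i,t}$, I would symmetrize over $(i,j)$: averaging the sum with its $(i\leftrightarrow j)$ relabeling and using $\alpha_{ij}=\alpha_{ji}$ converts $(y_{i,t}-y_{j,t})^T e_{i,t}$ into $\tfrac{1}{2}(y_{i,t}-y_{j,t})^T(e_{i,t}-e_{j,t})=\tfrac{1}{2}\|y_{i,t}-y_{j,t}\|^2$, yielding the stated $\tfrac{1}{2N}\sum_i\sum_{j\neq i}\alpha_{ij}g_r(\|y_{i,t}-y_{j,t}\|)\|y_{i,t}-y_{j,t}\|^2\tilde\gamma\,dt$ term. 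Assembling these pieces gives exactly (\ref{opt eq:doV}); the main obstacle is really just keeping careful track of the cancellations—the symmetrization of the repulsion double sum and the twin use of $\sum_i e_{i,t}=0$ to kill the averaged-gradient and averaged-Brownian corrections are where mistakes are easiest to make.
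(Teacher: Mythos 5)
Your proposal is correct and follows essentially the same route as the paper's proof in Appendix A.2: average (\ref{opt eq: dy_it}) to get $d\bar y_t$ (coupling drops by oddness of $g$ and $A=A^T$), apply Ito's lemma to $V_{i,t}$ with the quadratic variation $(1-1/N)m\tau^2\tilde\gamma^2\,dt$, and average over $i$, using $\sum_i e_{i,t}=0$ and the $(i\leftrightarrow j)$ symmetrization of the attraction/repulsion double sums. The only difference is that you make explicit the cancellations the paper leaves implicit in its final summation step.
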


\begin{proof}
	See Appendix \ref{opt App:lemma1}. 
\end{proof}

\subsection{Cohesiveness}

Let $L=[l_{ij}]$ be	the Laplacian matrix associated with the adjacency matrix $A$.
Notice that
\begin{align}
	\label{opt eq: sum1}
	-\sum_{i=1}^{N}\sum_{j=1,j\neq
		i}^{N}\alpha_{ij}(e_{i,t}-e_{j,t})^T e_{i,t}=\sum_{i=1}^{N}\sum_{j=1}^{N}l_{ij}(e_{i,t}-e_{j,t})^T e_{i,t}=-\sum_{i=1}^{N}\sum_{j=1}^{N}l_{ij}e_{j,t}^T e_{i,t}.
	\end{align}
Let $\me_t=[e_{1,t}^T,\ldots,e_{N,t}^T]^T$. Since graph $\mathcal{G}$ is connected, $\lambda_2(L)>0$ (see \cite{godsil2013algebraic}) and
\begin{align}
\label{opt eq: sum2}
\sum_{i=1}^{N}\sum_{j=1}^{N}l_{ij}e_{j,t}^T e_{i,t}=\me_t^T(L\otimes I_m)\me_t\ge \lambda_2\me_t^T \me_t=\lambda_2\sum_{i=1}^{N}\|e_{i,t}\|^2.
\end{align}
Here $\lambda_2:=\lambda_2(L)$ is the second-smallest eigenvalue of $L$, also called the {\em algebraic connectivity} of $\mathcal{G}$.

We make the following standing assumptions.
	\begin{assumption}
		\label{opt asp:gradient_bounded}
		(Bounded gradient) There exists $\eta >0$ such that $\Vert
		\nabla f(x)\Vert \leq \eta $ for all $x$.
	\end{assumption}
	\begin{assumption}
		\label{opt asp:gradient_strconvexity} {(Strong convexity) $(\nabla f (x) -\nabla
			f (x^{\prime}))^T (x-x^{\prime}) \geq \kappa \|x-x^{\prime}\|^{2}$ for some 
			$\kappa>0$ and for all $x,x^{\prime}$}.
	\end{assumption}
The following result provides a characterization of degree of cohesiveness of sample paths associated with different individual threads.

\begin{theorem}
	\label{opt thm1}
	Suppose relation (\ref{opt eq: dy_it}) holds, and assume a linear attraction function $g_{a}(\Vert x\Vert )=a$  and a repulsion function satisfying $%
		g_{r}(\Vert x\Vert )\Vert x\Vert ^{2}\leq b$. Then the ensemble average of $%
		\overline{V}_t$ is uniformly bounded in $t$ under either Assumption \ref{opt asp:gradient_bounded} or Assumption \ref{opt asp:gradient_strconvexity}. In particular,
	\begin{enumerate}
		\item If Assumption \ref{opt asp:gradient_bounded} is satisfied, then
		\begin{align}
		\mathbb{E}[\overline{V}_t ] \le e^{-c_1\tilde{\gamma} t}\overline{V}_0+\frac{c_2}{c_1}%
		(1-e^{-c_1\tilde{\gamma} t}).
		\end{align}
		where $c_1\in \left(0,2a\lambda_2\right)$ is arbitrary, and
		\begin{align*}
		c_2=\frac{\eta^2}{2(2a\lambda_2-c_1)}+\frac{b | Tr(L) |}{2N}+\frac{m\tau^2\tilde{\gamma}(N-1)}{2N}.
		\end{align*}
		In particular,
		\begin{equation*}
		\mathbb{E}[\overline{V}_{t}] \leq \max \{\overline{V}_{0},\psi_1 ^{\ast
		}(N)\},
		\end{equation*}%
		and in the long run, $\mathbb{E}[\overline{V}_{t}] \leq \psi_1 ^{\ast }(N)$
		where
		\begin{equation*}
		\psi_1 ^{\ast }(N)=\frac{1}{2a\lambda_2}{\left[\sqrt{\frac{\eta^2}{4a\lambda_2}+\frac{b | Tr(L) |}{2N}+\frac{m\tau^2\tilde{\gamma}(N-1)}{2N}}+\sqrt{\frac{\eta^2}{4a\lambda_2}}\right]^2}.
		\end{equation*}
		
		\item If Assumption \ref{opt asp:gradient_strconvexity} is satisfied, then
		\begin{align*}
		\mathbb{E}[\oV_t] \le e^{-2(\kappa+a\lambda_2)\tilde{\gamma}t}\oV_0+\left[\frac{b | Tr(L) |}{4N(\kappa+a\lambda_2)}+\frac{m\tau^2\tilde{\gamma} (N-1)}{4N(\kappa+a\lambda_2)}\right]\left[1-e^{-2(\kappa+a\lambda_2)\tilde{\gamma}t}\right].
		\end{align*}
		In the long run,
		\begin{align*}
		\mathbb{E}[\overline{V}_t] \le \psi_2^*(N) = \frac{b | Tr(L) |}{4N(\kappa+a\lambda_2)}+\frac{m\tau^2\tilde{\gamma} (N-1)}{4N(\kappa+a\lambda_2)}.
		\end{align*}
		
	\end{enumerate}
	
\end{theorem}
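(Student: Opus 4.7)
Starting from the explicit stochastic differential for $d\overline{V}_t$ given in Lemma~\ref{opt lemma1}, I would take expectations to eliminate the martingale term $(\tau\tilde{\gamma}/N)\sum_i dB_{i,t}^T e_{i,t}$, which leaves a drift inequality for $\mathbb{E}[\overline{V}_t]$ built out of four pieces: an attraction contribution, a repulsion contribution, a gradient contribution, and the deterministic Ito correction $m\tau^2\tilde{\gamma}^2(N-1)/(2N)$. The overall plan is to bound the attraction piece from below by a multiple of $\overline{V}_t$ (a ``restoring force''), bound the other pieces from above by constants plus a controllable fraction of $\overline{V}_t$, and then apply a Gronwall-type comparison to the resulting linear ODE inequality to obtain both the transient and long-run bounds.

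The attraction term is handled exactly as foreshadowed in equations (\ref{opt eq: sum1})--(\ref{opt eq: sum2}): since $\sum_i e_{i,t}=0$, the vector $\me_t$ lies in the orthogonal complement of the kernel of $L\otimes I_m$, so $\me_t^T(L\otimes I_m)\me_t\ge\lambda_2\sum_i\|e_{i,t}\|^2=2N\lambda_2\overline{V}_t$, giving $-\frac{a}{N}\sum_i\sum_{j\neq i}\alpha_{ij}(e_{i,t}-e_{j,t})^T e_{i,t}\le-2a\lambda_2\overline{V}_t$. The repulsion term is bounded pointwise using $g_r(\|x\|)\|x\|^2\le b$ and $\sum_{j\neq i}\alpha_{ij}=l_{ii}$, yielding $\frac{1}{2N}\sum_i\sum_{j\neq i}\alpha_{ij}g_r(\|y_{i,t}-y_{j,t}\|)\|y_{i,t}-y_{j,t}\|^2\le\frac{b|Tr(L)|}{2N}$. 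These two bounds do not depend on which assumption is in force.

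For the gradient contribution $-\frac{1}{N}\sum_i\nabla^T f(y_{i,t})e_{i,t}$, the two cases diverge. Under Assumption~\ref{opt asp:gradient_bounded}, I would upper-bound by $\frac{\eta}{N}\sum_i\|e_{i,t}\|$, apply Cauchy--Schwarz to produce $\eta\sqrt{2\overline{V}_t}$, and then use Young's inequality with a tunable parameter to split it as $(2a\lambda_2-c_1)\overline{V}_t+\eta^2/[2(2a\lambda_2-c_1)]$ for any $c_1\in(0,2a\lambda_2)$. Combining with the earlier bounds gives $d\mathbb{E}[\overline{V}_t]/dt\le\tilde{\gamma}(-c_1\mathbb{E}[\overline{V}_t]+c_2)$ with the stated $c_2$, and Gronwall delivers the first exponential estimate. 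Under Assumption~\ref{opt asp:gradient_strconvexity}, I would instead exploit $\sum_i e_{i,t}=0$ to rewrite the gradient term as
\begin{equation*}
-\frac{1}{N}\sum_i\nabla^T f(y_{i,t})e_{i,t}=-\frac{1}{N}\sum_i\bigl(\nabla f(y_{i,t})-\nabla f(\bar{y}_t)\bigr)^T(y_{i,t}-\bar{y}_t)\le -2\kappa\overline{V}_t,
\end{equation*}
using strong convexity; this produces the Gronwall rate $2(\kappa+a\lambda_2)$ directly, with no tuning parameter, and yields $\psi_2^*(N)$ cleanly.

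The one nontrivial step is verifying the optimized constant $\psi_1^*(N)$ in case~1. Because the Gronwall estimate $\mathbb{E}[\overline{V}_t]\le e^{-c_1\tilde{\gamma}t}\overline{V}_0+(c_2/c_1)(1-e^{-c_1\tilde{\gamma}t})$ is a convex combination of $\overline{V}_0$ and $c_2/c_1$, minimizing $c_2/c_1$ over $c_1\in(0,2a\lambda_2)$ yields both the long-run bound and the max-bound $\mathbb{E}[\overline{V}_t]\le\max\{\overline{V}_0,\psi_1^*(N)\}$. The substitution $u=2a\lambda_2-c_1$ reduces the first-order condition to a quadratic $Bu^2+2Au-AV=0$ (with $A=\eta^2/2$, $V=2a\lambda_2$, and $B$ capturing the repulsion-plus-noise constants), whose positive root $u^*$, after rationalization, gives $c_2/c_1\big|_{c_1^*}=(\sqrt{A+BV}+\sqrt{A})^2/V^2$; rewriting this in terms of $\eta^2/(4a\lambda_2)$ yields exactly the square-of-sum-of-square-roots shape in the statement. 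This algebraic simplification, while elementary, is the main obstacle and the only part of the argument that is not an immediate consequence of Lemma~\ref{opt lemma1} plus standard inequalities.
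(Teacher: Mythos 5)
Your proposal is correct and follows essentially the same route as the paper's proof: the attraction term is bounded by $-2a\lambda_2\overline{V}_t$ via the algebraic connectivity, the repulsion term by $b|Tr(L)|/(2N)$, the gradient term either by Cauchy--Schwarz plus Young's inequality (equivalent to the paper's completing-the-square with the tunable $c_1$) or by strong convexity using $\sum_i e_{i,t}=0$, and a Gronwall/integrating-factor argument then yields the transient and long-run bounds. The only cosmetic difference is that you minimize $c_2/c_1$ over $c_1$ via a first-order condition and a quadratic in $u=2a\lambda_2-c_1$, whereas the paper uses partial fractions plus the Cauchy--Schwarz inequality; both give the same $\psi_1^*(N)$.
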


\begin{proof}
	See Appendix \ref{opt App:thm1}.
\end{proof}

\begin{remark}
	Note that the upper bound on the ensemble average of $\overline{V}_t$ is decreasing in $a$ (attraction potential)
	and increasing in $b$ (repulsive potential).  Hence, the relative strength of these parameters implies a trade-off between exploration (less cohesive solutions) and exploitation (more cohesive solutions).
\end{remark}

\begin{remark}
	Note further that the algebraic connectivity $\lambda_2$ is critical in determining the upper bound of $\mathbb{E}[\overline{V}_t]$. When $N$ is fixed, a larger $\lambda_2$ leads to a smaller upper bound.	In a complete graph, $\lambda_2$ achieves its maximum value $N$, and $Tr(L)=N(N-1)$. In this situation,
	
	\begin{align*}
	\lim_{N\rightarrow \infty }\psi_1 ^{\ast }(N)
	&=\lim_{N\rightarrow \infty }\frac{%
		1}{2aN}{\left[\sqrt{\frac{\eta^2}{4aN}+\frac{b(N-1)}{2}+\frac{m\tau^2(N-1)}{%
				2N}\tilde{\gamma}}+\sqrt{\frac{\eta^2}{4aN}}\right]^2}
	=\frac{b}{4a}.\\
	\lim_{N\rightarrow \infty }\psi_2 ^{\ast }(N)& =\lim_{N\rightarrow \infty }\left[\frac{b(N-1)}{4(\kappa+aN)}+\frac{m\tau^2\tilde{\gamma} (N-1)}{4N(\kappa+aN)}\right]=\frac{b}{4a}.
	\end{align*}%
	With a large number of threads, cohesiveness is
	ensured by the choice of ${b}/{a}$ governing the
	interplay between inter-individual attraction and repulsion.
\end{remark}



\section{Noise Reduction in Convex Optimization}
\label{opt sec: increased convex}

In this section we formalize the  noise reduction properties of the flocking-based algorithmic scheme
for convex optimization. 
Repulsion amongst threads prevents duplication of search effort which may arise for instance, when there are multiple locally optimal solutions. Thus, for convex optimization problems, there is no need for a ``repulsion" amongst individual threads and in this section we set $g_r(\|x\|)=0$. As we shall see below, when the underlying problem is not convex, repulsion amongst threads does facilitate the identification of a globally optimal solution.

We introduce an additional regularity assumption as follows.
\begin{assumption}
	\label{opt asp:Lipschitz} {(Lipschitz) $\| \nabla f(x)-\nabla f(x^{\prime})\|\leq
		\mu\|x-x^{\prime}\|$ for some $\mu> 0$ and for all $x,x^{\prime}$}.
\end{assumption}
Since $g_r(\|x\|)=0$, equation (\ref{opt eq:doV}) can be simplified to
\begin{multline}  
d\overline{V}_t 
=  -\frac{a}{N}\sum_{i=1}^{N}\sum_{j=1,j\neq
	i}^{N}\alpha_{ij}(e_{i,t}-e_{j,t})^T e_{i,t}\tilde{\gamma}dt-\frac{1}{N}%
\sum_{i=1}^N \nabla^T f(y_{i,t}) e_{i,t}\tilde{\gamma} dt + \frac{\tau}{N}\tilde{\gamma} \sum_{i=1}^N {dB_{i,t}}^T e_{i,t}\\
+\frac{m\tau^2\tilde{\gamma}^2(N-1)%
}{2N}dt.\label{opt eq:doV_simp}
\end{multline}

Let us introduce a measure $U_{t}={(1 /2)} \Vert \bar{y}_{t}-x^{\ast }\Vert ^{2}={(1 /2)} \Vert \bar{x}_{t/\tilde{\Gamma}}-x^{\ast }\Vert ^{2}$, of the distance between the average solution identified by all threads at time $t/\tilde{\Gamma}$ and the unique optimal solution $x^{\ast }$.
Let $F_{i,t}=({1}/{2})\Vert y_{i,t}-x^{\ast
}\Vert ^{2}$
and $\overline{F}_{t}=({1}/{N})\sum_{i=1}^{N}F_{i,t}$. 
Notice that 
\begin{align}  \label{opt eq: FUV}
\overline{F}_{t} &=\frac{1}{N}\sum_{i=1}^{N}\frac{1}{2}(y_{i,t}-\bar{y}_{t}+%
\bar{y}_{t}-x^{\ast })\cdot (y_{i,t}-\bar{y}_{t}+\bar{y}_{t}-x^{\ast }) 
\notag \\
&=\frac{1}{N}\sum_{i=1}^{N}\frac{1}{2}\Vert y_{i,t}-\bar{y}_{t}\Vert ^{2}+%
\frac{1}{N}\sum_{i=1}^{N}\frac{1}{2}\Vert \bar{y}_{t}-x^{\ast }\Vert ^{2}+\frac{1}{N}\sum_{i=1}^{N}(y_{i,t}-\bar{y}_{t})\cdot (\bar{y}_{t}-x^{\ast })\notag\\
&=%
\overline{V}_{t}+U_{t}.
\end{align}%

The following results provide a characterization of the performance of the flocking-based approach to stochastic optimization under Assumptions \ref{opt asp:gradient_strconvexity} and \ref{opt asp:Lipschitz}.

\begin{lemma}
	\label{opt thm2}
	Suppose relation (\ref{opt eq: dy_it}) and Assumptions \ref{opt asp:gradient_strconvexity} and \ref{opt asp:Lipschitz} hold. Assume a linear attraction function $g_{a}(\Vert x\Vert )=a$  and a repulsion function $%
		g_{r}(\Vert x\Vert )=0$. Then the
	ensemble average of $U_{t}$ is uniformly bounded:
	\begin{align*}
	\label{opt neq: EU}
	\mathbb{E}[U_t]
	\leq & e^{-2\kappa\tilde{\gamma} t}\left[U_0+\frac{(\mu-\kappa)}{a\lambda_2}\overline{V}_0\right] +\frac{m\tau^2\tilde{\gamma}}{4\kappa N}\left[1+\frac{(\mu-\kappa)(N-1)}{a\lambda_2}%
	\right](1-e^{-2\kappa\tilde{\gamma} t}).
	\end{align*}
	In the long-run	the upper bound is: 
	\begin{equation*}
	\lim \sup_{t \rightarrow \infty} \mathbb{E}[U_t]
	\leq \phi ^{\ast }(N)=\frac{m\tau^2\tilde{\gamma}}{4\kappa N}\left[1+\frac{(\mu-\kappa)(N-1)}{a\lambda_2}%
	\right].
	\end{equation*}
\end{lemma}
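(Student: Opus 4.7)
The plan is to derive an upper bound on $\mathbb{E}[U_t]$ by a Lyapunov-style argument that couples $U_t$ to the cohesion measure $\overline{V}_t$ already controlled in Theorem \ref{opt thm1}. First I would average the $N$ SDEs in (\ref{opt eq: dy_it}); because $g$ is odd and $A$ is symmetric, the pairwise interaction terms cancel in the sum, leaving
\[
d\bar{y}_t = -\frac{\tilde{\gamma}}{N}\sum_{i=1}^N \nabla f(y_{i,t})\,dt + \frac{\tau\tilde{\gamma}}{N}\sum_{i=1}^N dB_{i,t}.
\]
Applying Ito's lemma to $U_t = \tfrac12\|\bar{y}_t - x^*\|^2$ produces a gradient drift, a martingale term, and (from $d\bar{y}_t\cdot d\bar{y}_t = m\tau^2\tilde{\gamma}^2/N\,dt$) a quadratic-variation contribution $\tfrac{m\tau^2\tilde{\gamma}^2}{2N}dt$. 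To handle the gradient drift I would split $\bar{y}_t-x^* = (\bar{y}_t-y_{i,t}) + (y_{i,t}-x^*)$. Using $\nabla f(x^*)=0$ and Assumption \ref{opt asp:gradient_strconvexity}, the sum of the second pieces gives $-\tfrac{1}{N}\sum_i (y_{i,t}-x^*)^T\nabla f(y_{i,t}) \le -2\kappa(U_t+\overline{V}_t)$ via identity (\ref{opt eq: FUV}). The first pieces sum to $\tfrac{1}{N}\sum_i e_{i,t}^T \nabla f(y_{i,t})$; since $\sum_i e_{i,t}=0$, I can replace $\nabla f(y_{i,t})$ by $\nabla f(y_{i,t})-\nabla f(\bar{y}_t)$ and apply Cauchy--Schwarz together with Assumption \ref{opt asp:Lipschitz} to bound it above by $2\mu\,\overline{V}_t$. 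Taking expectation yields the differential inequality
\[
\frac{d}{dt}\mathbb{E}[U_t] \le -2\kappa\tilde{\gamma}\,\mathbb{E}[U_t] + 2(\mu-\kappa)\tilde{\gamma}\,\mathbb{E}[\overline{V}_t] + \frac{m\tau^2\tilde{\gamma}^2}{2N}.
\]

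To decouple the forcing by $\overline{V}_t$, I would introduce the composite Lyapunov function $W_t = U_t + c\,\overline{V}_t$ with $c=(\mu-\kappa)/(a\lambda_2)$, and combine the above with the differential inequality that underlies Theorem \ref{opt thm1}(2) specialized to $b=0$, i.e.\ $\tfrac{d}{dt}\mathbb{E}[\overline{V}_t] \le -2(\kappa+a\lambda_2)\tilde{\gamma}\,\mathbb{E}[\overline{V}_t] + m\tau^2\tilde{\gamma}^2(N-1)/(2N)$. With this calibration of $c$ the cross-terms in $\mathbb{E}[\overline{V}_t]$ combine exactly, giving $\tfrac{d}{dt}\mathbb{E}[W_t] \le -2\kappa\tilde{\gamma}\,\mathbb{E}[W_t] + C$ with $C = \tfrac{m\tau^2\tilde{\gamma}^2}{2N}[1 + c(N-1)]$. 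Gronwall's inequality then produces the time-dependent bound on $\mathbb{E}[W_t]$, and because $c\ge 0$ so that $U_t\le W_t$ pointwise, the bound transfers to $\mathbb{E}[U_t]$; letting $t\to\infty$ yields $\phi^*(N)$.

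The main obstacle I anticipate is pinning down the correct coupling constant $c$. Too small and the $(\mu-\kappa)$ cross-term fails to be absorbed by the dissipation of $\overline{V}_t$; too large and the effective decay rate of $W_t$ drops below $2\kappa$. The unique choice $c=(\mu-\kappa)/(a\lambda_2)$ makes $\overline{V}_t$'s contribution to $W_t$ dissipate at exactly the same rate $2\kappa$ as $U_t$ alone, which explains why the bound on $\mathbb{E}[U_t]$ decays at the rate $2\kappa$ rather than the faster rate $2(\kappa+a\lambda_2)$ enjoyed by $\overline{V}_t$ on its own.
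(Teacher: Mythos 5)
Your proposal is correct and follows essentially the same route as the paper's proof: the same splitting $\bar{y}_t-x^*=(\bar{y}_t-y_{i,t})+(y_{i,t}-x^*)$ with the identity $\overline{F}_t=U_t+\overline{V}_t$, the same Lipschitz bound $\frac{1}{N}\sum_i \nabla^T f(y_{i,t})e_{i,t}\le 2\mu\overline{V}_t$, the same cohesion inequality with $b=0$, and the same composite function $W_t=U_t+\frac{(\mu-\kappa)}{a\lambda_2}\overline{V}_t$ decaying at rate $2\kappa\tilde{\gamma}$. The only cosmetic difference is that you take expectations first and apply Gronwall, whereas the paper integrates the stochastic inequality pathwise via the factor $e^{2\kappa\tilde{\gamma}t}$ before taking the ensemble average; both yield the stated bound.
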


\begin{proof}
	See Appendix \ref{opt App:thm2}. 
\end{proof}

\begin{theorem}
	\label{opt cor}
	Suppose relation (\ref{opt eq: dy_it}) and Assumptions \ref{opt asp:gradient_strconvexity} and \ref{opt asp:Lipschitz} hold. Assume a linear attraction function $g_{a}(\Vert x\Vert )=a$  and a repulsion function $%
		g_{r}(\Vert x\Vert )=0$. We have
	\begin{multline}
	\label{opt neq: EUx}
	\frac{1}{2}\mathbb{E}[\|\ox_t-x^*\|^2]
	\leq e^{-2\kappa\tilde{\gamma}\tilde{\Gamma} t}\left[U_0+\frac{(\mu-\kappa)}{a\lambda_2}\overline{V}_0\right] +\frac{m\tau^2\tilde{\gamma}}{4\kappa N}\left[1+\frac{(\mu-\kappa)(N-1)}{a\lambda_2}%
	\right](1-e^{-2\kappa\tilde{\gamma}\tilde{\Gamma}  t}).
	\end{multline}
	In the long-run	the upper bound is: 
	\begin{equation}
	\lim \sup_{t \rightarrow \infty} \frac{1}{2}\mathbb{E}[\|\ox_t-x^*\|^2]
	\leq \phi ^{\ast }(N)=\frac{m\tau^2\tilde{\gamma}}{4\kappa N}\left[1+\frac{(\mu-\kappa)(N-1)}{a\lambda_2}%
	\right].
	\end{equation}
\end{theorem}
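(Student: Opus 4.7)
The plan is to derive Theorem \ref{opt cor} as a direct corollary of Lemma \ref{opt thm2} by performing a time rescaling, since the two statements differ only in whether the bound is expressed in terms of the rescaled process $\bar{y}_t$ or the original process $\bar{x}_t$.

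First I would invoke the defining relation $y_{i,t}=x_{i,t/\tilde{\Gamma}}$, which yields $\bar{y}_t = \bar{x}_{t/\tilde{\Gamma}}$ after averaging over $i$. Equivalently, substituting $t \mapsto \tilde{\Gamma}t$ gives $\bar{x}_t = \bar{y}_{\tilde{\Gamma}t}$, so that
\[
\tfrac{1}{2}\mathbb{E}\bigl[\|\bar{x}_t-x^\ast\|^2\bigr]
= \tfrac{1}{2}\mathbb{E}\bigl[\|\bar{y}_{\tilde{\Gamma}t}-x^\ast\|^2\bigr]
= \mathbb{E}[U_{\tilde{\Gamma}t}].
\]

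Next I would apply Lemma \ref{opt thm2} at time $\tilde{\Gamma}t$ in place of $t$. The lemma provides a uniform-in-$t$ bound on $\mathbb{E}[U_t]$ of the form $e^{-2\kappa\tilde{\gamma} t}A + B(1-e^{-2\kappa\tilde{\gamma} t})$, where $A$ and $B$ are the constants appearing in the statement. Substituting $t\leftarrow \tilde{\Gamma}t$ simply replaces the exponent $-2\kappa\tilde{\gamma} t$ by $-2\kappa\tilde{\gamma}\tilde{\Gamma} t$ while leaving the constants $A,B$ untouched, which reproduces inequality (\ref{opt neq: EUx}) verbatim.

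Finally, for the long-run bound, I would let $t\to\infty$ in the finite-time inequality: since $\kappa,\tilde{\gamma},\tilde{\Gamma}>0$, the exponential terms vanish and only the constant $B = \phi^\ast(N)$ survives, giving the desired $\limsup$ bound $\phi^\ast(N)$. No substantive obstacle arises here — the entire argument is a routine change of time variable — so I would not expect to re-derive anything from the stochastic calculus; all the real work has already been done in Lemma \ref{opt thm2}.
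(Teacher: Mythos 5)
Your proposal is correct and coincides with the paper's own proof: the paper likewise observes that $\tfrac{1}{2}\mathbb{E}[\|\ox_t-x^*\|^2]=\tfrac{1}{2}\mathbb{E}[\|\oy_{\tilde{\Gamma}t}-x^*\|^2]=\mathbb{E}[U_{\tilde{\Gamma}t}]$ and then invokes Lemma \ref{opt thm2} at the rescaled time, which replaces the exponent $-2\kappa\tilde{\gamma}t$ by $-2\kappa\tilde{\gamma}\tilde{\Gamma}t$ and yields the long-run bound $\phi^{\ast}(N)$ by letting $t\to\infty$. No gap; the argument is exactly the routine time-change you describe.
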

\begin{proof}
	Given that $(1/2)\mathbb{E}[\|\ox_t-x^*\|^2]=(1/2)\mathbb{E}[\|\oy_{\tilde{\Gamma}t}-x^*\|^2]=\mathbb{E}[U_{\tilde{\Gamma} t}]$, the above results follow immediately from Lemma \ref{opt thm2}.
\end{proof}

\begin{remark}
	The convergence rate of $(1/2)\mathbb{E}[\|\ox_t-x^*\|^2]$ is characterized by factor $2\kappa\tilde{\gamma}\tilde{\Gamma}=2\kappa\tilde{\Gamma}/\tilde{\Delta} t$, i.e., stronger convexity of $f(\cdot)$, larger step sizes, and shorter sampling times accelerate convergence.
\end{remark}

Note that if we choose $a$ such that $a\lambda_2\sim N$, $\phi ^{\ast }(N) \sim {1}/{N}$, then the long-run upper bound of $\mathbb{E}[\|\ox_t-x^*\|^2]$ is monotonically decreasing in $N$. In what follows, we will show that the flocking-based approach exhibits a noise reduction property that is similar to that of a stochastic gradient algorithm based upon the average of $N$ gradient samples.

Let us assume there is no time overhead in the centralized algorithm so that $\Delta t=\tilde{\Delta} t$. 
Also assume $\Gamma=\tilde{\Gamma}$. It follows that $\tilde{\gamma}=\gamma$
and $\tau = \sigma\sqrt{\Gamma/\gamma}$. 

The stochastic differential equation for the algorithm based upon the average of $N$ gradient samples is:
\begin{equation*}
dy_{t}=-\nabla f(y_{t})\gamma dt+\tau_N \gamma dB_{t}.
\end{equation*}
Let $G_{t}=({1}/{2})\Vert y_{t}-x^{\ast }\Vert ^{2}$. It follows that 
\begin{align}
\label{opt eq: G_t}
dG_{t}  =dy_{t}\cdot y_{t}+\frac{1}{2}dy_{t}\cdot dy_{t} =-\nabla
^{T}f(y_{t})y_{t}\gamma dt+\tau_N\gamma{dB_{t}}^{T}y_{t}+\frac{1}{2}m\tau_N^2\gamma^2dt.
\end{align}
Then,
\begin{align*}
dG_{t}  \le -2\kappa\gamma G_{t} dt+\frac{1}{2}m\tau_N^2\gamma^2dt+\tau_N\gamma{dB_{t}}^{T}y_{t}.
\end{align*}%
As in the proof of Lemma \ref{opt thm2}, it can be shown that 
\begin{align*}
\mathbb{E}[G_t]  \le e^{-2\kappa\gamma t}G_{0}+\frac{m\tau_N^2 \gamma}{4\kappa}(1-e^{-2\kappa\gamma t})=e^{-2\kappa\gamma t}G_{0}+\frac{m\sigma^2 \Gamma}{4\kappa N }(1-e^{-2\kappa\gamma t}).
\end{align*}
Therefore,
\begin{align*}
\frac{1}{2}\mathbb{E}{[\|x_t-x^*\|^2]}=\mathbb{E}[G_{\Gamma t}]  \le e^{-2\kappa\gamma\Gamma t}G_{0}+\frac{m\sigma^2 \Gamma}{4\kappa N }(1-e^{-2\kappa\gamma\Gamma t}).
\end{align*}
In the long run, 
\begin{align*}
\lim \sup_{t \rightarrow \infty} \frac{1}{2}\mathbb{E}{[\|x_t-x^*\|^2]}\le \frac{m \sigma^2 \Gamma}{4\kappa N }.
\end{align*}
Since $\tau =\sigma\sqrt{\Gamma/\gamma}$, a comparison with the upper bound obtained in Theorem \ref{opt cor} (when $a\lambda_2\sim N$) readily indicates that the flocking-based approach exhibits a noise reduction property that is similar to that of a stochastic gradient algorithm based upon the average of $N$ gradient samples.
In the next section, we show that the flocking-based approach outperforms (in real-time) stochastic gradient algorithm based upon the average of $N$ gradient samples when overhead due to synchronization is taken into account.



\subsection{Real-time Performance Comparison}
\label{opt subsec: real}
When sampling is undertaken in parallel, synchronization is needed to execute the tasks that can not be executed in parallel.
Hence, the improvement obtained by parallel sampling and {\em centralized} gradient estimation is limited by overhead related to {\em (i)} time spent gathering samples and {\em (ii)} synchronization\footnote{See for example \cite{hill2008amdahl} for a discussion on Amdahl's law in multi-core processing.}.
In what follows we will account for overhead by assuming the total time needed to implement an iteration of stochastic gradient algorithm with $N$ samples obtained in parallel is monotonically increasing in $N$, i.e., $\Delta t \sim N^{{1}/{\beta}}\tilde{\Delta} t$ where $\beta>1$.  
The parameter $\beta>1$ encapsulates the relative burden of overhead so that when $\beta \gg 1$, the burden is relatively weak but increases with values closer to $1$.

In a way similar to the analysis presented in the previous section, we get
\begin{align*}
dG_t \geq -2\mu\gamma G_{t} dt+\frac{1}{2}m\tau _{N}^{2}\gamma^2 dt+{\tau _{N}}\gamma{dB_{t}}%
^{T}y_{t}.
\end{align*}%
A lower bound could then be obtained:
\begin{equation}
\label{opt neq: EG lower}
\mathbb{E}[G_t]  \geq e^{-2\mu\gamma t}G_{0}+\frac{m\tau _{N}^{2}\gamma}{4\mu}%
(1-e^{-2\mu\gamma t}).
\end{equation}%
Then,
\begin{align}
\label{opt neq: EG lowerx}
\frac{1}{2}\mathbb{E}{[\|x_t-x^*\|^2]}=\mathbb{E}[G_{\Gamma t}]  \geq e^{-2\mu\gamma\Gamma t}G_{0}+\frac{m\tau _{N}^{2}\gamma}{4\mu}%
(1-e^{-2\mu\gamma\Gamma t}),
\end{align}
and in the long run, 
\[\frac{1}{2}\mathbb{E}{[\|x_t-x^*\|^2]} \geq \frac{{m\tau _{N}^{2}}\gamma}{4\mu }=\frac{m\sigma^{2}\Gamma}{4\mu N}. \]
In what follows we consider two specific scenarios to compare the two algorithms in both convergence rate and ultimate error bound.

In the case that the two algorithms use the same step size $\Gamma=\tilde{\Gamma}$, 
recalling that $\gamma=1/\Delta t$ and $\tilde{\gamma}=1/\tilde{\Delta} t$, we have $\gamma=(\tilde{\Delta}t/\Delta t)\tilde{\gamma}\sim N^{-1/\beta}\tilde{\gamma}$. 
By (\ref{opt neq: EG lowerx}), the convergence rate of $(1/2)\mathbb{E}{[\|x_t-x^*\|^2]}$ is characterized by factor $2\mu\gamma\Gamma$, as compared to $2\kappa \tilde{\gamma}\tilde{\Gamma}$ under the flocking-based algorithm.
	Since $2\mu\gamma\Gamma/(2\kappa \tilde{\gamma}\tilde{\Gamma})\sim N^{-1/\beta}$,
	the convergence rate of the centralized scheme is slower than that of a flocking-based scheme. The long-run performance of the centralized implementation is bounded below by
$
{m\sigma^{2}\Gamma}/({4\mu N}) \sim {1}/{N},
$
which is on the same level of the flocking-based scheme.

In the case that the two algorithms use stepsizes proportional to the average sampling times, i.e., $\Gamma/\Delta t=\tilde{\Gamma}/\tilde{\Delta} t$,
$(1/2)\mathbb{E}[\|\bar{x}_t-x^{\ast}\|^2]$ and $(1/2)\mathbb{E}[\|x_t-x^{\ast}\|^2]$
have comparable convergence rates since 
\[2\mu\gamma\Gamma=2\mu \Gamma/\Delta t=2\mu \tilde{\Gamma}/\tilde{\Delta} t=2\mu\tilde{\gamma}\tilde{\Gamma}.\]
However, $(1/2)\mathbb{E}[\|x_t-x^{\ast}\|^2]$ has a long-run lower bound
\[\frac{m\tau _{N}^{2}\gamma}{4\mu}=\frac{m\sigma^{2}\Gamma}{4\mu N}=\frac{m\sigma^{2}\tilde{\Gamma}}{4\mu N}\frac{\Delta t}{\tilde{\Delta}t}\sim N^{1/\beta-1}.\]
We can formalize the claim that the flocking approach (each thread has a single gradient sample) outperforms the stochastic gradient approach based upon $N$ samples per step in finite time.

\begin{proposition}
	\label{opt prop 1}
	Suppose relation (\ref{opt eq:dy_t centralized}), (\ref{opt eq: dy_it}) and Assumptions \ref{opt asp:gradient_strconvexity}, \ref{opt asp:Lipschitz} hold. Assume a linear attraction function $g_{a}(\Vert x\Vert )=a$  and a repulsion function $%
		g_{r}(\Vert x\Vert )=0$.
	Also assume $\Delta t=N^{1/\beta}\tilde{\Delta}t$ for all $N > 1$ and for some $\beta>1$, and $\Gamma/\Delta t=\tilde{\Gamma}/\tilde{\Delta} t$. Then there exists $N^*<\infty$ and $t^* < \infty$ such
	\begin{equation*}
	\frac{1}{2}\mathbb{E}[\Vert x_{t}-x^{\ast }\Vert ^{2}] > \frac{1}{2}\mathbb{E}[\Vert \bar{x}_{t}-x^{\ast }\Vert ^{2}] 
	\end{equation*}
	for all $N>N^*$ and $t>t^*$. 
\end{proposition}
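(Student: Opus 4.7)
The plan is to directly compare the long-run lower bound on the centralized error derived in (opt neq: EG lowerx) with the long-run upper bound on the flocking error from Theorem opt cor. Under the two scaling assumptions in the statement, the centralized steady-state mean-square error decays only as $N^{1/\beta-1}$, whereas (when $a\lambda_2$ grows at least linearly in $N$, as is the case for the complete graph with constant $a$ discussed in the remark after Theorem opt thm1) the flocking steady-state error $\phi^*(N)$ decays as $1/N$. Since $\beta>1$, the ratio of these two separates as $N^{1/\beta}\to\infty$, so for large $N$ there is a widening strict gap. Combined with the fact that the exponential transients on both sides decay at rates that are $N$-independent, a single $t^*$ will suffice to make the strict inequality hold for all $N>N^*$.

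I would begin by specializing the centralized lower bound. Using $\tau_N^2=\sigma^2\Gamma\Delta t/N$ together with $\Gamma=N^{1/\beta}\tilde{\Gamma}$ yields
\[
\frac{m\tau_N^2\gamma}{4\mu}=\frac{m\sigma^2\Gamma}{4\mu N}=\frac{m\sigma^2\tilde{\Gamma}}{4\mu}\,N^{1/\beta-1},
\]
while $\gamma\Gamma=\Gamma/\Delta t=\tilde{\Gamma}/\tilde{\Delta}t=\tilde{\gamma}\tilde{\Gamma}$ is independent of $N$. Substituting into (opt neq: EG lowerx) gives
\[
\tfrac{1}{2}\mathbb{E}[\|x_t-x^*\|^2]\ge e^{-2\mu\tilde{\gamma}\tilde{\Gamma}t}G_0+\frac{m\sigma^2\tilde{\Gamma}}{4\mu}\,N^{1/\beta-1}\bigl(1-e^{-2\mu\tilde{\gamma}\tilde{\Gamma}t}\bigr).
\]
On the flocking side, Theorem opt cor bounds $\tfrac{1}{2}\mathbb{E}[\|\ox_t-x^*\|^2]$ above by an exponentially decaying term at rate $2\kappa\tilde{\gamma}\tilde{\Gamma}$ (also $N$-independent) plus $\phi^*(N)\bigl(1-e^{-2\kappa\tilde{\gamma}\tilde{\Gamma}t}\bigr)$, and the scaling $a\lambda_2\gtrsim N$ yields $\phi^*(N)\le C/N$ for some constant $C>0$ independent of $N$.

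Next, the comparison of the two steady-state values is elementary:
\[
\frac{m\sigma^2\tilde{\Gamma}}{4\mu}\,N^{1/\beta-1}-\frac{C}{N}\;\ge\;\Delta_0>0
\]
for all sufficiently large $N$, say $N>N^*$, because the ratio of these two terms equals $\bigl(m\sigma^2\tilde{\Gamma}/(4\mu C)\bigr)\,N^{1/\beta}\to\infty$. Having fixed such $\Delta_0$, I would then pick $t^*$ large enough that both exponential transients—each involving the bounded initial quantities $G_0$, $U_0$, $\oV_0$ multiplied by $e^{-2\mu\tilde{\gamma}\tilde{\Gamma}t}$ or $e^{-2\kappa\tilde{\gamma}\tilde{\Gamma}t}$—are smaller than $\Delta_0/4$. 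This yields the required strict inequality $\tfrac{1}{2}\mathbb{E}[\|x_t-x^*\|^2]>\tfrac{1}{2}\mathbb{E}[\|\ox_t-x^*\|^2]$ uniformly for all $N>N^*$ and $t>t^*$.

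The main obstacle is bookkeeping the $N$-dependence of every constant through both inequalities and ensuring that the exponential decay rates do not deteriorate with $N$ (otherwise a uniform $t^*$ could not exist). That uniformity hinges on the identity $\gamma\Gamma=\tilde{\gamma}\tilde{\Gamma}$, which is a direct consequence of the proportional-stepsize assumption $\Gamma/\Delta t=\tilde{\Gamma}/\tilde{\Delta}t$. A secondary subtlety is the implicit scaling $a\lambda_2\gtrsim N$ (satisfied, e.g., by a complete graph where $\lambda_2=N$, or by choosing $a$ to grow with $N$): without it, $\phi^*(N)$ need not vanish quickly enough and the gap argument collapses.
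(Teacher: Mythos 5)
Your route is the same one the paper takes in Appendix \ref{opt App:prop1}: compare the centralized lower bound (\ref{opt neq: EG lowerx}) with the flocking upper bound of Theorem \ref{opt cor}, use $\Gamma=N^{1/\beta}\tilde{\Gamma}$ to get the centralized steady-state term $d_1=\frac{m\sigma^2\tilde{\Gamma}}{4\mu}N^{1/\beta-1}$ versus $d_3=\phi^*(N)\le C/N$ (under the scaling $a\lambda_2\sim N$, which the paper also invokes implicitly), and use $\gamma\Gamma=\tilde{\gamma}\tilde{\Gamma}$ to keep the exponential rates $N$-free. The genuine gap is your uniform-gap step: because $\beta>1$ gives $1/\beta-1<0$, \emph{both} $d_1\sim N^{1/\beta-1}$ and $d_3\sim 1/N$ tend to $0$ as $N\to\infty$, so their difference cannot be bounded below by a fixed $\Delta_0>0$ over all $N>N^*$; only the ratio $d_1/d_3\sim N^{1/\beta}$ diverges. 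With no $N$-independent $\Delta_0$, the step ``choose $t^*$ so that the transients are below $\Delta_0/4$'' has nothing to anchor to.

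This is not merely cosmetic for the bound-comparison strategy: the inequality between the two bounds at time $t$ amounts to $d_1-d_3>(d_2-d_3)e^{-2\kappa\tilde{\gamma}\tilde{\Gamma}t}-(G_0-d_1)e^{-2\mu\tilde{\gamma}\tilde{\Gamma}t}$ with $d_2=U_0+\frac{(\mu-\kappa)}{a\lambda_2}\overline{V}_0$; as $N\to\infty$ with $t$ fixed the right-hand side tends to $U_0e^{-2\kappa\tilde{\gamma}\tilde{\Gamma}t}-G_0e^{-2\mu\tilde{\gamma}\tilde{\Gamma}t}$, which is a strictly positive constant in the generic case (e.g.\ both schemes started at the same point, $U_0=G_0>0$, $\mu>\kappa$), while the left-hand side vanishes. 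So no fixed $t^*$ makes the comparison of these particular bounds hold for all large $N$; the threshold time coming out of this argument necessarily grows on the order of $\frac{1-1/\beta}{2\kappa\tilde{\gamma}\tilde{\Gamma}}\ln N$. The paper sidesteps the issue by making the threshold $N$-dependent: having fixed $N^*$ so that $d_3<d_1$, it defines $t^*$ as the solution of $d_1-d_3=|G_0-d_1|e^{-2\mu\gamma\Gamma t^*}+|d_2-d_3|e^{-2\kappa\tilde{\gamma}\tilde{\Gamma}t^*}$, which depends on $N$ through $d_1,d_2,d_3$. To repair your write-up, replace the uniform $\Delta_0$ by the $N$-dependent gap $d_1-d_3$ and let $t^*$ depend on $N$ accordingly (matching the paper); a genuinely $N$-uniform $t^*$ would require additional input beyond these bounds, such as special initial conditions with $U_0=\overline{V}_0=0$.
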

\begin{proof}
	See Appendix \ref{opt App:prop1}.
\end{proof}

\begin{remark}
	It is of interest to analyze algorithm (\ref{opt eq: x(i,k)}) without adding the flocking term 
	\[-\sum_{j=1,j\neq
			i}^{N}\alpha_{ij}\nabla_{x(i,k)} J(\|x(i,k)-x(j_i,k)\|).\] In this case the dynamics of different threads are independent:
		\begin{align*}
		dy_{i,t}=-\nabla f(y_{i,t})\tilde{\gamma}dt+\tau\tilde{\gamma}dB_{i,t}.
		\end{align*}
		Let $H_{i,t}=(1/2)\|y_{i,t}-x^*\|^2$. It can be shown that
		\begin{equation*}
		\mathbb{E}[H_{i,t}]  \geq e^{-2\mu\tilde{\gamma} t}H_{i,0}+\frac{m\tau^{2}\tilde{\gamma}}{4\mu}%
		(1-e^{-2\mu\tilde{\gamma} t}).
		\end{equation*}%
		Hence
		\begin{equation*}
		\frac{1}{2}\mathbb{E}[\|x_{i,t}-x^*\|^2]=\mathbb{E}[H_{i,\tilde{\Gamma}t}]  \geq e^{-2\mu\tilde{\gamma}\tilde{\Gamma} t}H_{i,0}+\frac{m\tau^{2}\tilde{\gamma}}{4\mu}%
		(1-e^{-2\mu\tilde{\gamma}\tilde{\Gamma} t}).
		\end{equation*}%
		The long-run lower bound ${m\tau^{2}\tilde{\gamma}}/{4\mu}$ is greater than the upper bound for a flocking-based algorithm with large $N$. Therefore, the performance of each individual thread without the flocking term is worse than that of a flocking one.
		
		With respect to the average solution $\bar{x}_t$, notice that $x_{i,t}$'s converge to i.i.d. limiting random variables $x_{i,\infty}$'s. Therefore as $N\rightarrow\infty$, $\bar{x}_t$ converges almost surely to the expectation of $x_{i,\infty}$ (same for all $i$'s) by the law of large numbers. 
		When the function $f(\cdot)$ is not symmetric with respect to $x^*$, we have that $(1/2)\mathbb{E}[\|\bar{x}_t-x^*\|^2]$ is not decreasing to $0$ as $N$ increases.
		Hence, without adding the flocking term $-\sum_{j=1,j\neq i}^{N}\alpha_{ij}\nabla_{x(i,k)} J(\|x(i,k)-x(j_i,k)\|)$, the performance of the algorithm is unsatisfactory.
\end{remark}

\section{Application to Non-Convex Optimization}
\label{opt sec: nonconvex}
In this section, we apply the flocking-based algorithm to the optimization of general non-convex functions. We will provide some motivating simulation examples first and then discuss the global asymptotic properties of this scheme.

\subsection{Simulation Examples}
In this part, we illustrate with a limited simulation testbed the performance benefits of a flocking-based approach when the objective function is not convex. 
The results indicate that the noise reduction property is maintained. They also suggest that a flocking-based approach seems better suited to escape locally optimal solutions than a stochastic gradient descent based upon the average of $N$ samples. This is likely due to the repulsive force which enforces a certain level of diversity in the set of candidate solutions. The flocking based-gradient descent dynamics are thus more  likely to lead to globally optimal solutions.
We assume that $\Gamma/\Delta t=\tilde{\Gamma}/\tilde{\Delta} t$ in the following simulation examples.

\subsubsection{Ackley's Function (Case 1)}	
\label{opt subsec: Ackley repul}
We first consider Ackley's function 
\begin{align}  \label{opt Ackley}
f(x,y) =  -20\exp\left(-0.2\sqrt{0.5\left(x^{2}+y^{2}\right)}%
\right) -\exp\left(0.5\left(\cos\left(2\pi x\right)+\cos\left(2\pi
y\right)\right)\right) + e + 20.
\end{align}
It has a global minimum at $x^{\ast}=(0,0)$ and various local optima.

We use $N=20$ parallel threads, and the attraction/repulsion function $%
g(x)=-x[4-800\exp(-\|x\|^2)]$ is adopted.
In the flocking-based approach, each thread is randomly connected with 8 other threads. Overhead parameter $\beta=5$. Sampling times are constant with $\tilde{\Delta} t=0.01$ and $\Delta t=N^{{1}/{\beta}}\tilde{\Delta} t\simeq 0.018$, respectively. Step sizes are $\tilde{\Gamma}=0.01$ and $\Gamma=0.018$. Noise level is $\sigma=5$.
Initially, all the sampling points were distributed randomly in the $[10,15]\times
[10,15]$ interval.  Simulations run 10 times (60s each). Performances of the centralized algorithm are shown in Figure \ref{opt fig:Ackley_repulsion}(a). It is clear that all independent threads got trapped in local optima. By contrast, we can see in Figure \ref{opt fig:Ackley_repulsion} that $\ox_t$ in the flocking-based scheme approaches the global optimum successfully. This is likely due to the repulsive force which enforces a certain level of diversity in the set of candidate solutions.
\begin{figure}
	\centering
	\subfigure[Distance to optimum for sample average scheme with $N$ samples.]{\includegraphics[width=3in]{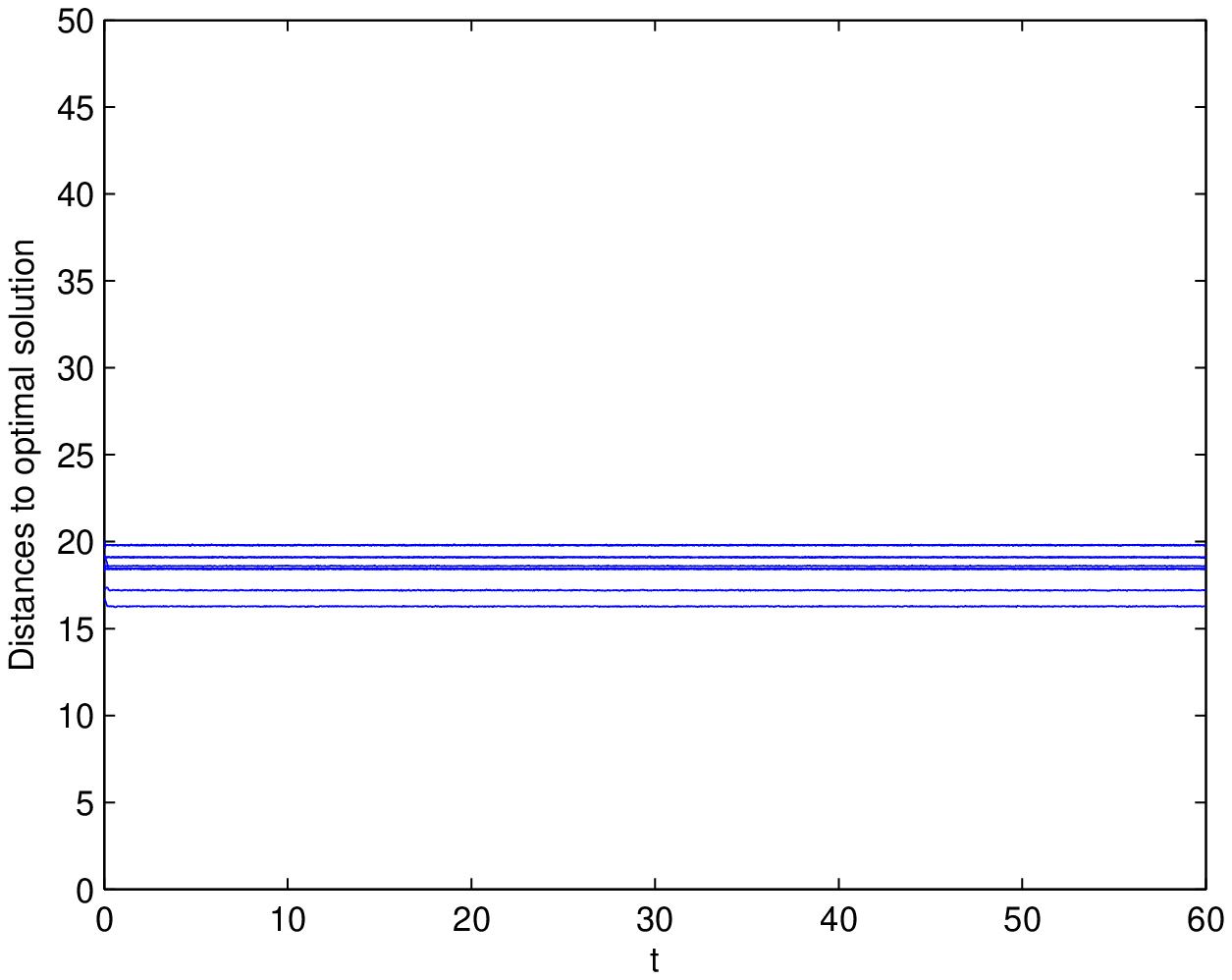}} 
	\subfigure[Distance
	to optimum with
	flocking discipline.]{\includegraphics[width=3in]{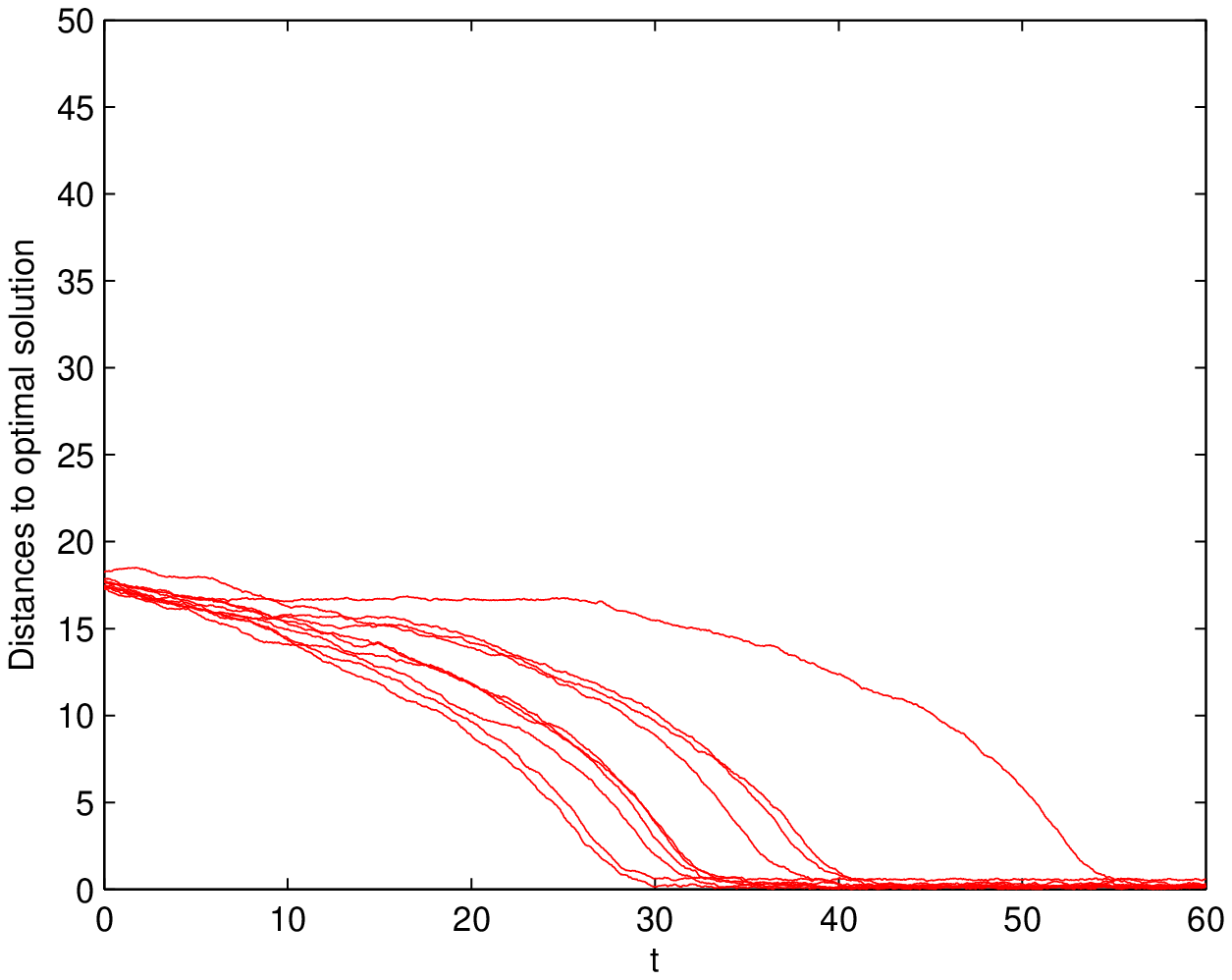}}
	\caption{Performance comparison for Ackley's function (Case 1).}
	\label{opt fig:Ackley_repulsion}
\end{figure}

\subsubsection{Ackley's Function (Case 2)}
\label{opt subsec: Ackley}

\begin{figure}
	\centering
	\subfigure[Distance to optimum for sample average scheme with $N$ samples.]{\includegraphics[width=3in]{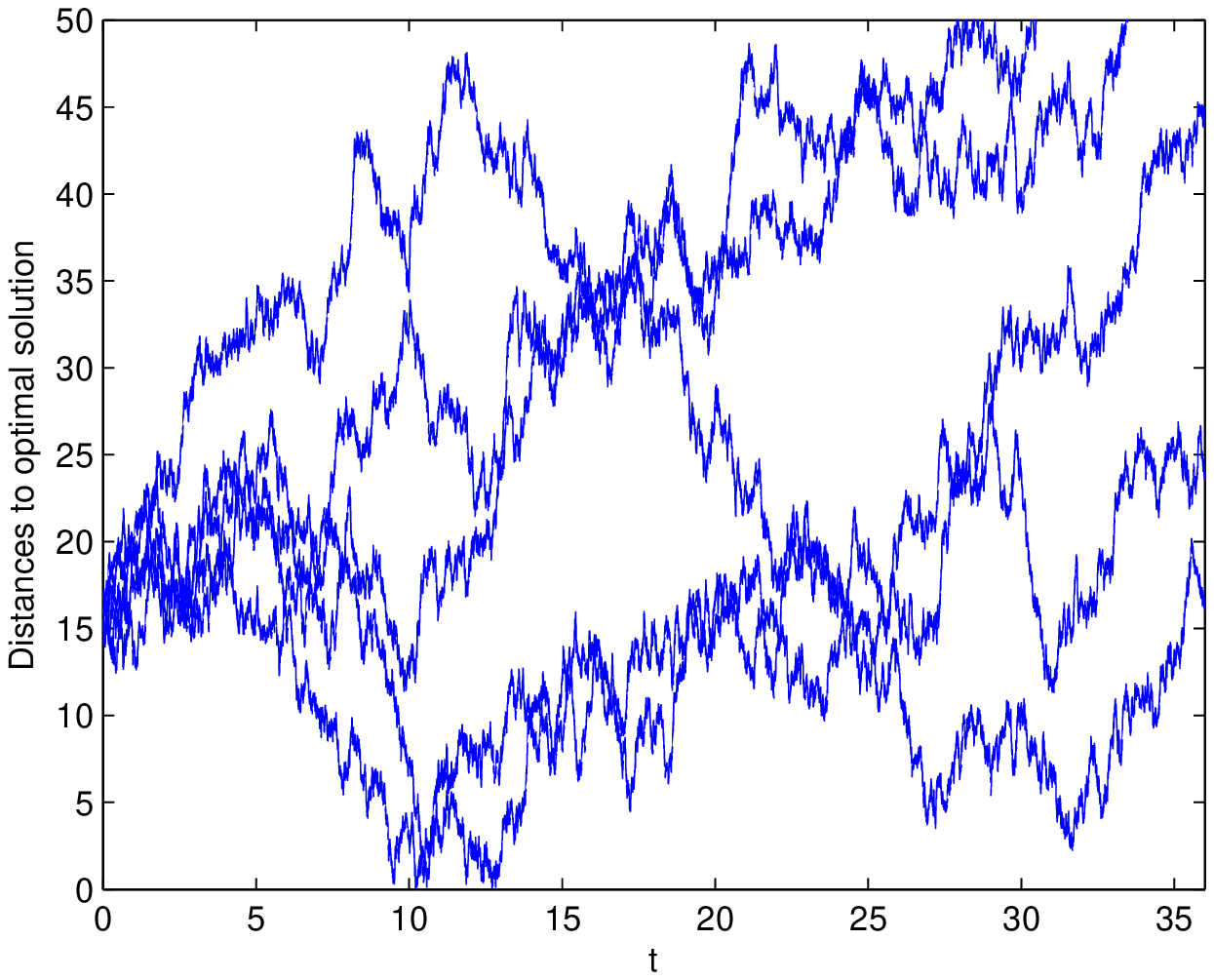}}
	\subfigure[Distance
	to optimum with
	flocking discipline.]{\includegraphics[width=3in]{{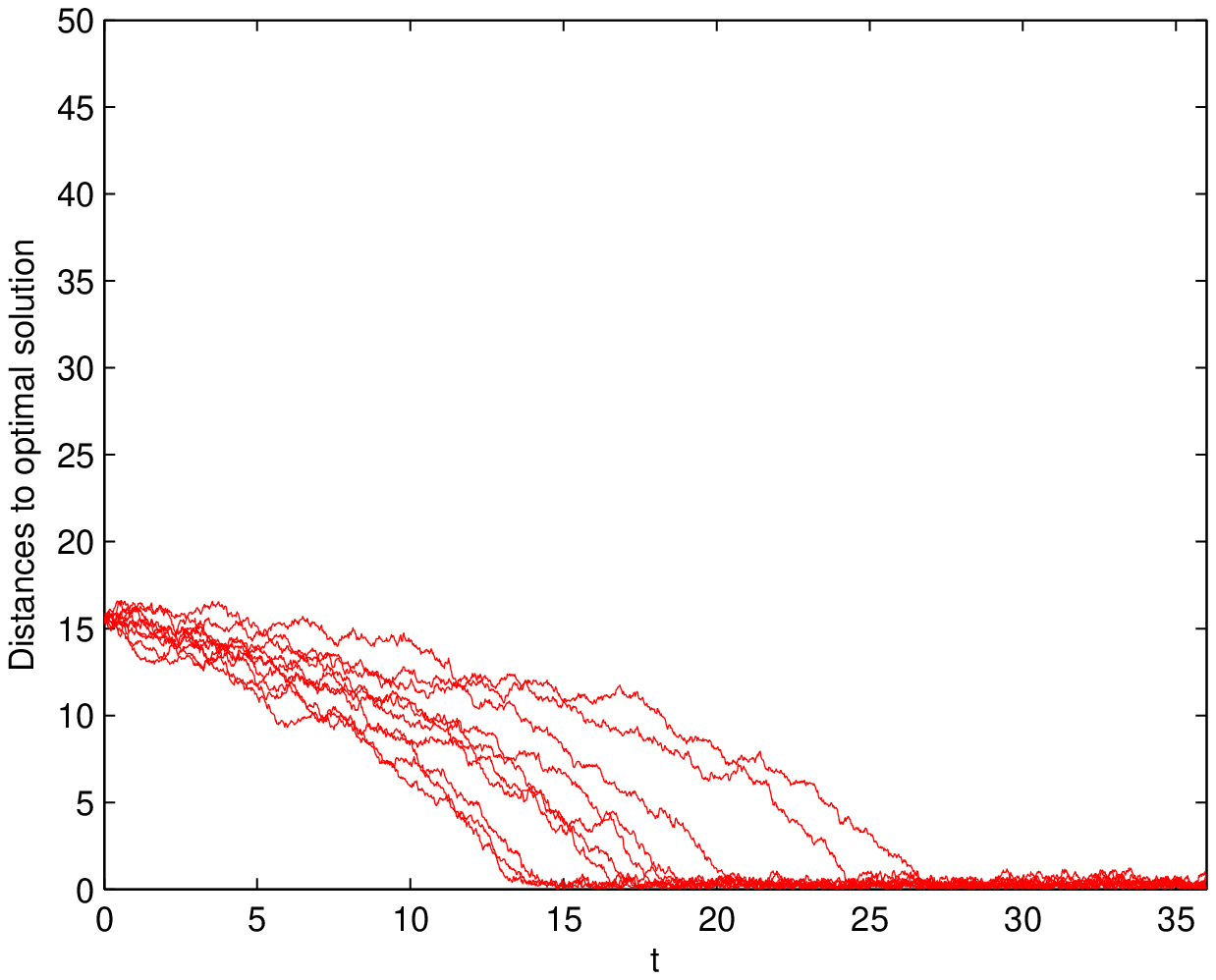}}} 
	\caption{Performance comparison for Ackley's function (Case 2).}
	\label{opt fig:Ackley}
\end{figure}
In this case we assume that the noise level is $\sigma=35$.
We use 
$N=30$ computing threads, and the attraction/repulsion function $%
g(x)=-x[3-0.01\exp(-\|x\|^2)]$ is adopted. In the flocking-based approach, each thread randomly communicates with 8 other threads.
Overhead parameter $\beta=1.5$. Sampling times are constant with $\tilde{\Delta} t=0.04$ and $\Delta t=N^{{1}/{\beta}}\tilde{\Delta} t\simeq 0.184$, respectively. Step sizes are $\tilde{\Gamma}=0.04$ and $\Gamma=0.184$.
Initially, the sampling points were distributed randomly in the $[10,12]\times
[10,12]$ interval.  Simulations run for 36s.

We can see in Figure \ref{opt fig:Ackley}(a) that individual threads operating in
parallel are not able to approach the globally optimal solution. On the contrary, the flocking
discipline allows convergence to the global optimum successfully (see Figure \ref{opt fig:Ackley}(b)).

\subsection{Asymptotic Noise Reduction}
We now discuss the asymptotic noise reduction properties of the flocking-based algorithmic scheme for non-convex optimization. We start by reviewing the asymptotic performance of the centralized algorithm based upon the average of $N$ samples per step. Recalling (\ref{opt eq:dy_t centralized}),
\begin{equation*}
dy_{t}=-\nabla f(y_{t})\gamma dt+\tau_N \gamma dB_{t},  
\end{equation*}
The limiting density of $y_t$ (which solves a related Fokker-Planck equation  (see \cite{gardiner1985handbook,risken1984fokker})) is
\begin{align*}
\hat{\pi} (y_t)=\frac{\exp\{-2 f(x)/(\tau_N^2 \gamma)\}}{\int  \exp\{-2 f(y)/(\tau_N^2 \gamma)\}}=\frac{\exp\{-2N f(y)/(\sigma^2\tilde{\Gamma})\}}{\int  \exp\{-2N f(y)/(\sigma^2\tilde{\Gamma})\}}
\end{align*}
assuming $\Gamma=\tilde{\Gamma}$.

We return now to the flocking scheme. By equation (\ref{opt eq: dy_it}) and the relation $g(x)=-\nabla_x J(\|x\|)$, we have
\begin{align}
\label{opt eq: dx_it nonconvex}
dy_{i,t}=\left[-\nabla f(y_{i,t})-\sum_{j=1,j\neq i}^N \alpha_{ij}\nabla_{y_{i,t}}J\|y_{i,t}-y_{j,t}\|\right]\tilde{\gamma}dt+\tau\tilde{\gamma} dB_{i,t}.
\end{align}
Let $\my_t= [y_{1,t}^T,\ldots,y_{N,t}^T]^T \in \mathbb{R}^{N\times m}$, $\mathbf{B_t}=[B_{1,t}^T,\ldots,B_{N,t}^T]^T \in \mathbb{R}^{N\times m}$. Define
\begin{align}
H(\my_t)=\sum_{i=1}^{N}f(y_{i,t})+\frac{1}{2}\sum_{i=1}^{N}\sum_{j=1,j\neq i}^N \alpha_{ij} J(\|y_{i,t}-y_{j,t}\|).
\end{align}
We can rewrite (\ref{opt eq: dx_it nonconvex}) in a compact form:
\begin{align}
\label{opt eq: dx_t}
d\my_t=-\nabla H(\my_t)\tilde{\gamma}dt+\tau \tilde{\gamma}d\mathbf{B}_t.
\end{align}
The Fokker-Planck equation related to the stochastic differential equation (\ref{opt eq: dx_t}) is:
\begin{align}
\label{opt eq: Fokker}
\frac{\partial p_t}{\partial t}=-\nabla\cdot (\nabla H(\my)\tilde{\gamma}p_t)+\frac{\tau^2\tilde{\gamma}^2}{2}\nabla^2 p_t
\end{align}
where $p_t:=p(\my_t,t\mid \my_0,0)$ is the probability density of $\my_t$.

\begin{proposition}
	\label{opt prop 2}
	Suppose relation (\ref{opt eq: dy_it}) holds, and $\int_{}\exp\{-2H(\my)/\tau^2\}d\my$ is finite. Then $\my_t$ weakly approaches a unique equilibrium, which is a Gibbs distribution with density
	\begin{align}
	\label{opt eq: Gibbs}
	\pi(\my)=\frac{1}{K}\exp\{-2H(\my)/(\tau^2\tilde{\gamma})\},
	\end{align}
	where
	\begin{align*}
	K=\int \exp\{-2H(\my)/(\tau^2\tilde{\gamma})\}d\my.
	\end{align*}
\end{proposition}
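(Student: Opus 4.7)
The plan is to verify directly that the Gibbs density $\pi$ is a stationary solution of the Fokker--Planck equation (\ref{opt eq: Fokker}), and then to invoke standard results for non-degenerate Langevin-type diffusions to upgrade stationarity to uniqueness and weak convergence.

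First I would substitute $\pi(\my)=K^{-1}\exp\{-\beta H(\my)\}$, with the shorthand $\beta:=2/(\tau^{2}\tilde{\gamma})$, into the right-hand side of (\ref{opt eq: Fokker}) and show it vanishes. The useful identities are
\[
\nabla\pi=-\beta\pi\,\nabla H,\qquad \nabla^{2}\pi=-\beta\pi\,\nabla^{2}H+\beta^{2}\pi\,\|\nabla H\|^{2},
\]
together with $\nabla\cdot(\nabla H\cdot\pi)=\pi\,\nabla^{2}H-\beta\pi\,\|\nabla H\|^{2}$ and the key cancellation $\tfrac{1}{2}\tau^{2}\tilde{\gamma}^{2}\beta=\tilde{\gamma}$. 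Plugging these into the drift and diffusion pieces of (\ref{opt eq: Fokker}) yields $\partial_{t}\pi\equiv 0$, so $\pi$ is time-invariant. The assumed finiteness of $K=\int\exp\{-2H(\my)/(\tau^{2}\tilde{\gamma})\}\,d\my$ then guarantees that $\pi$ is a bona fide probability density.

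Second, I would justify uniqueness of the invariant measure and weak convergence $p_{t}\Rightarrow\pi$. The SDE (\ref{opt eq: dx_t}) has a constant non-degenerate diffusion matrix $\tau^{2}\tilde{\gamma}^{2}I$ and a smooth drift derived from the potential $H$, so by standard parabolic regularity its transition kernel admits a jointly smooth, strictly positive Lebesgue density. This provides both the strong Feller property and topological irreducibility of the Markov semigroup. Because we have already exhibited a finite invariant probability measure $\pi$, Doob's theorem (or equivalently the Has'minskii criterion for reversible diffusions) then implies that $\pi$ is the \emph{unique} invariant distribution and that $p_{t}\to\pi$ weakly from every initial condition. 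This is the classical convergence-to-equilibrium statement for overdamped Langevin dynamics, and under our integrability hypothesis nothing more than these general facts is needed.

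The step I expect to be the main obstacle is controlling the large-$\|\my\|$ behavior carefully enough to exclude loss of mass at infinity and to ensure tightness of $\{p_{t}\}$. A clean way to close this gap is to use $\pi$ itself as a Lyapunov reference: the relative entropy $\int p_{t}\log(p_{t}/\pi)\,d\my$ is non-increasing along the flow (the Kolmogorov $H$-theorem for reversible diffusions), so $p_{t}$ stays in a sublevel set of relative entropy, which combined with the strong Feller/irreducibility framework above forces weak convergence to $\pi$ without extra coercivity assumptions on $H$ beyond the integrability condition $K<\infty$.
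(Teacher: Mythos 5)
Your proposal is correct and takes essentially the same route as the paper, which offers no detailed argument but simply appeals to the standard theory of diffusions (Gardiner, Risken): checking that the Gibbs density makes the probability flux in the Fokker--Planck equation vanish, and then invoking classical strong Feller/irreducibility (Doob--Has'minskii) results together with a tightness argument for uniqueness and weak convergence, is precisely that standard theory written out. Note only that the drift term in (\ref{opt eq: Fokker}) as printed carries a sign slip---for the SDE (\ref{opt eq: dx_t}) it should read $+\nabla\cdot(\nabla H(\my)\tilde{\gamma}p_t)$---and your cancellation $\tfrac{1}{2}\tau^{2}\tilde{\gamma}^{2}\beta=\tilde{\gamma}$ closes the stationarity computation for the correctly signed equation.
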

Proposition \ref{opt prop 2} comes from the standard theory of diffusion (see \cite{gardiner1985handbook,risken1984fokker}). The assumption is required for the distribution (\ref{opt eq: Gibbs}) to be well-defined. It is satisfied when $H(\my)$ grows rapidly enough, or when the feasible region $X$ has reflected boundaries.

We now show that for large values of the parameter $a$ (attractive force) the asymptotic noise reduction properties of the flocking-based algorithmic scheme is the same to that of a centralized algorithm based upon the average of $N$ samples per step. 
\begin{theorem}
	\label{opt Thm: nonconvex}
	Suppose relation (\ref{opt eq: dy_it}) holds. Assume linear attraction functions $g_{a}(\Vert x\Vert )=a$ for some $a>0$ and repulsion functions satisfying $g_{r}(\Vert x\Vert )\Vert x\Vert ^{2}\leq b$.
	Then the asymptotic probability distribution of $\oy_t$ has density:
	\begin{align*}
	\pi^* (\oy)=\frac{\exp\left\{-2Nf(\oy)/(\sigma^2\tilde{\Gamma})\right\}}{\int \exp\left\{-2Nf(\oy)/(\sigma^2\tilde{\Gamma})\right\}d\oy} ,
	\end{align*}
	as $a\rightarrow \infty$.
\end{theorem}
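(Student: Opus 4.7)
The plan is to derive the marginal stationary density of $\oy_t$ from the joint Gibbs density in Proposition \ref{opt prop 2} and then analyse its $a\to\infty$ limit through concentration of the deviations $e_i=y_i-\oy$. Note first that $\tau^2\tilde{\gamma}=\sigma^2\tilde{\Gamma}$ (immediate from $\tau=\sigma\sqrt{\tilde{\Gamma}\tilde{\Delta} t}$ and $\tilde{\gamma}=1/\tilde{\Delta} t$), so Proposition \ref{opt prop 2} yields $\pi(\my)\propto\exp\{-2H(\my)/(\sigma^2\tilde{\Gamma})\}$. I change coordinates $\my\mapsto(\oy,\me)$ subject to the linear constraint $\sum_i e_i=0$; the Jacobian is constant and absorbable into the normaliser. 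Translation invariance $J(\|y_i-y_j\|)=J(\|e_i-e_j\|)$ splits $H$ cleanly as $H=\sum_i f(\oy+e_i)+V(\me)$, where $V(\me)=\tfrac{1}{2}\sum_{i,j}\alpha_{ij}J(\|e_i-e_j\|)$ depends only on $\me$.

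Integrating out $\me$ gives
\[
\pi_a^*(\oy)\;\propto\;\int \exp\Bigl\{-\tfrac{2}{\sigma^2\tilde{\Gamma}}\Bigl[\textstyle\sum_i f(\oy+e_i)+V(\me)\Bigr]\Bigr\}\,d\me.
\]
With $g_a\equiv a$, the potential decomposes as $V(\me)=(a/2)\me^T(L\otimes I_m)\me+V_{\mathrm{rep}}(\me)$, where $V_{\mathrm{rep}}$ is $a$-independent; the quadratic coefficient $a$ forces the $\me$-measure to concentrate at scale $O(1/\sqrt{a})$ around the minimiser of $V$ on the subspace $\{\sum_i e_i=0\}$, which itself sits at equilibrium distances $\rho\le\sqrt{b/a}$ by the hypothesis $g_r(r)r^2\le b$. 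On this effective support $\|e_i\|=O(1/\sqrt{a})$, and continuity of $f$ yields $\sum_i f(\oy+e_i)=Nf(\oy)+O(1/\sqrt{a})$ uniformly in $\oy$ on compacta. Factoring $\exp\{-2Nf(\oy)/(\sigma^2\tilde{\Gamma})\}$ out of the integral leaves a residual whose $\oy$-dependence vanishes as $a\to\infty$, and after renormalisation
\[
\pi^*_a(\oy)\;\longrightarrow\;\pi^*(\oy)=\frac{\exp\{-2Nf(\oy)/(\sigma^2\tilde{\Gamma})\}}{\int \exp\{-2Nf(\oy)/(\sigma^2\tilde{\Gamma})\}\,d\oy}.
\]

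The main obstacle is making the concentration-plus-factorisation rigorous despite the possible singularity of $V_{\mathrm{rep}}$ at coincident points, since the bound $g_r(r)r^2\le b$ permits $J(r)\to+\infty$ as $r\to 0$. My approach is a Laplace-type rescaling $\me=\me'/\sqrt{a}$: the attractive term becomes the $a$-free quadratic $\tfrac{1}{2}(\me')^T(L\otimes I_m)\me'$, which is positive definite on $\{\sum_i e'_i=0\}$ because $\mathcal{G}$ is connected (so $\lambda_2(L)>0$) and therefore provides a uniform integrable Gaussian weight; $f(\oy+e'_i/\sqrt{a})\to f(\oy)$ pointwise by continuity, and the residual factor $\exp\{-2V_{\mathrm{rep}}(\me'/\sqrt{a})/(\sigma^2\tilde{\Gamma})\}$ is $\oy$-independent. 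Forming the ratio $\pi^*_a(\oy)/\pi^*_a(\oy_0)$ against a reference $\oy_0$ cancels the problematic $\oy$-independent factors and reduces the claim to $\exp\{-2N(f(\oy)-f(\oy_0))/(\sigma^2\tilde{\Gamma})\}$ in the limit; the stated density follows by normalisation, whose finiteness is inherited from the standing hypothesis of Proposition \ref{opt prop 2}.
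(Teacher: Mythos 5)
Your proposal follows essentially the same route as the paper's own proof: pass to the coordinates $(\oy,\me)$, use translation invariance to split $H$ into $\sum_i f(\oy+e_i)$ plus the quadratic attraction $\frac{a}{2}\me^T(L\otimes I_m)\me$ and an $a$-independent repulsive part, rescale $\me$ by $\sqrt{a}$ (a Laplace-type argument made integrable by $\lambda_2(L)>0$), and let the $\oy$-independent factors cancel upon normalization. Your extra care in handling a possibly singular $J_r$ at the origin via the ratio $\pi_a^*(\oy)/\pi_a^*(\oy_0)$ is a minor refinement of the paper's argument, which simply invokes continuity of $J_r$ at $0$.
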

\begin{proof}
	See Appendix \ref{opt App:thm3}.
\end{proof}
\begin{remark}
	The asymptotic probability distribution of $\ox_t$ is the same as that of $\oy_t$.
\end{remark}
\begin{remark} The limiting probability density of $\ox_t$ as $a\rightarrow \infty$ does not depend on the specific network topology as long as $\lambda_2>0$, i.e., the network is connected. This is a very mild networking requirement satisfied by many simple topologies (e.g. ring, line, bus, mesh).
\end{remark}

\begin{remark}
	As $N\rightarrow \infty$, $\pi^*(\ox)$ concentrates on the global minima of $f(\cdot)$, in which case the average solution of the flocking scheme is guaranteed to approximate a global minimum (see \cite{geman1986diffusions} for a reference).
\end{remark}


\section{Conclusions}
\label{opt sec: conclusion}

In recent years, the paradigm of cloud computing has emerged as an architecture for computing that makes use of distributed (networked) computing resources. In this paper, we analyze a distributed computing algorithmic scheme for stochastic optimization which relies on modest communication requirements amongst processors and most importantly, does not require synchronization. The proposed distributed algorithmic framework may provide significant speed-up in application domains in which sampling times are non-negligible. This is the case, for example, in the optimization of complex systems for which performance may only be evaluated via computationally intensive Òblack-boxÓ simulation models.

The scheme considered in this paper has $N>1$ computing threads operating under a connected network. At each step, each thread independently computes a new
solution by using a noisy estimation of the gradient, which is further perturbed by a
combination of repulsive and attractive terms depending upon
the relative distance to solutions identified by neighboring threads.
When the objective function is convex, we showed that a flocking-like approach for distributed stochastic optimization provides a noise reduction effect similar to that of a centralized stochastic gradient algorithm based upon the average of $N$ gradient samples at each step. When the overhead related to the time needed to gather $N$ samples and synchronization is not negligible, the flocking implementation outperforms a centralized stochastic gradient algorithm based upon the average of $N$ gradient samples at each step.
When the objective function is not convex, the flocking-based approach seems better suited to escape locally optimal solutions due to the repulsive force which enforces a certain level of diversity in the set of candidate solutions. Here again, we showed that the noise reduction effect is similar to that associated to the centralized stochastic gradient algorithm based upon the average of $N$ gradient samples at each step.


\section{Appendix}
\subsection{Notations}

\begin{table}[!h]	
	\centering
	\caption{Notations}
	\begin{tabular}{cl}
		Symbol & Meaning \\
		\midrule
		$m$ & Dimension of the solution space \\
		$N$          & Number of computing threads \\
		$\Gamma$ (respectively, $\tilde{\Gamma}$) & Step size for the centralized algorithm (respectively, flocking-based algorithm)\\
		$\Delta t$ & Average sampling time to gather $N$ samples in parallel  \\
		$\tilde{\Delta} t$ & Average sampling time to gather one sample\\
		$\varepsilon(k)$& Simulation noise from one sample (centralized algorithm)\\
		$\varepsilon(i,k)$ & Simulation noise from one sample (flocking-based algorithm)\\
		$\sigma$ & Variance of $\varepsilon(k)$ and $\varepsilon(i,k)$ (in each dimension)\\
		$\gamma$ & $1/\Delta t$ \\
		$\tilde{\gamma}$ & $1/\tilde{\Delta} t$ \\
		$\tau_N$ & $\sigma\sqrt{\Gamma\Delta t/N}$\\
		$\tau$ & $\sigma\sqrt{\tilde{\Gamma}\tilde{\Delta} t}$\\
		$a$ & Parameter for linear attraction in the flocking term\\
		$b$ & Bound on repulsion in the flocking term\\
		$\mathcal{G}$ & Interaction graph of ``flocking'' threads\\
		$\alpha_{ij}$ & Indicator of connectivity between thread $i$ and $j$\\
		$A$ & Matrix $[\alpha_{ij}]$ \\
		$L$ & Laplacian matrix of $A$\\
		$\lambda_2$ & Second-smallest eigenvalue of $L$ (algebraic connectivity of $\mathcal{G}$)\\ 
		$\eta$ & Bound on the gradient of $f(\cdot)$\\
		$\kappa$ & Strong convexity parameter\\
		$\mu$ & Lipschitz constant\\
		\bottomrule
	\end{tabular}
\end{table}

\subsection{Proof of Lemma \ref{opt lemma1}}
\label{opt App:lemma1}

Since $A=A^T$ and $g(\cdot)$ is an odd function,
\begin{multline}  
d{\overline{y}_{t}} =\frac{1}{N}\sum_{i=1}^{N}d{y_{i,t}} 
 =\frac{1}{N}\sum_{i=1}^{N}\left[ -\nabla f(y_{i,t})\tilde{\gamma}dt+\sum_{j=1,j\neq i}^N \alpha_{ij}g(y_{i,t}-y_{j,t})\tilde{\gamma}dt+\tau \tilde{\gamma}dB_{i,t}\right]  \\
 =-\frac{1}{N}\sum_{i=1}^{N}\nabla f(y_{i,t})\tilde{\gamma}dt+\frac{\tau }{N}\tilde{\gamma}%
\sum_{i=1}^{N}{dB_{i,t}}. \label{opt eq:macro}
\end{multline}%
By (\ref{opt eq: dy_it}) and (\ref{opt eq:macro}),
\begin{align}
\label{opt eq:micro} 
d{e_{i,t}}=& d{y}_{i,t}-d{\overline{y}_{t}}  \notag  \\
=& \sum_{j=1,j\neq i}^{N}\alpha_{ij}g(y_{i,t}-y_{j,t})\tilde{\gamma}dt-\nabla f(y_{i,t})\tilde{\gamma}dt+\frac{1}{N}%
\sum_{i=1}^N\nabla f({x}_{i,t})\tilde{\gamma}dt  +\tau \tilde{\gamma}{dB_{i,t}}-\frac{\tau }{N}\tilde{\gamma}\sum_{i=1}^N {dB_{i,t}}.
\end{align}%
From equation (\ref{opt eq: g()}) and the assumption that $g_a(\|x\|) = a$,
\begin{align*}
g(y_{i,t}-y_{j,t}) =-(y_{i,t}-y_{j,t})[g_{a}(\Vert y_{i,t}-y_{j,t}\Vert
)-g_{r}(\Vert y_{i,t}-y_{j,t}\Vert )] =-(y_{i,t}-y_{j,t})[a-g_{r}(\Vert
y_{i,t}-y_{j,t}\Vert )].
\end{align*}%
We then have 
\begin{align*}
\sum_{j=1,j\neq i}^{N}\alpha_{ij}g(y_{i,t}-y_{j,t})= -a\sum_{j=1,j\neq
	i}^{N}\alpha_{ij}(y_{i,t}-y_{j,t})+\sum_{j=1,j\neq i}^{N}\alpha_{ij}g_{r}(\Vert
y_{i,t}-y_{j,t}\Vert )(y_{i,t}-y_{j,t}) .
\end{align*}%
Consequently, equation (\ref{opt eq:micro}) becomes 
\begin{multline}
\label{opt eq:micro2}
d{e_{i,t}}= -a\sum_{j=1,j\neq
	i}^{N}\alpha_{ij}(y_{i,t}-y_{j,t})\tilde{\gamma}dt +\sum_{j=1,j\neq i}^{N}\alpha_{ij}g_{r}(\Vert
y_{i,t}-y_{j,t}\Vert )(y_{i,t}-y_{j,t})\tilde{\gamma}dt
-\nabla f(y_{i,t})\tilde{\gamma}dt\\
+\frac{1}{N}\sum_{j=1}^N\nabla f({x}_{j,t}) \tilde{\gamma}dt
+\tau \tilde{\gamma}{dB_{i,t}}-\frac{\tau }{N}\tilde{\gamma}\sum_{j=1}^N{dB_{j,t}}.
\end{multline}%
In light of the facts that $dt\cdot dt=dt\cdot dB_{i,t}=0$, $dB_{i,t}\cdot dB_{j,t}=0$ ($i\neq j$), and $dB_{i,t}\cdot dB_{i,t}=mdt$ (see \cite{oksendal2003stochastic}),
	\begin{align*}
	de_{i,t}\cdot de_{i,t}=\left( 1-\frac{1}{N}\right)m\tau^2 \tilde{\gamma}^2dt.
	\end{align*}
	Then from Ito's lemma,
\begin{align}  \label{opt eq: dV_i}
d{V_{i,t}}=& d{e_{i,t}}\cdot e_{i,t}+\frac{1}{2}de_{i,t}\cdot de_{i,t} 
\notag \\
=& -a\sum_{j=1,j\neq
	i}^{N}\alpha_{ij}(y_{i,t}-y_{j,t})^T e_{i,t}\tilde{\gamma}dt+\sum_{j=1,j\neq i}^{N}\alpha_{ij}g_{r}(\Vert
y_{i,t}-y_{j,t}\Vert )(y_{i,t}-y_{j,t})^{T}e_{i,t}\tilde{\gamma}dt \notag\\
& -\left[ \nabla
f(y_{i,t})-\frac{1}{N}\sum_{j=1}^N\nabla f({x}_{j,t})\right] ^{T}e_{i,t}\tilde{\gamma}dt  +\left[ \tau \tilde{\gamma}{dB_{i,t}}-\frac{\tau }{N}\tilde{\gamma}\sum_{j=1}^N{dB_{j,t}}\right]
^{T}e_{i,t}+\frac{m\tau ^{2} \tilde{\gamma}^2}{2}\left( 1-\frac{1}{N}\right) dt.
\end{align}
In conclusion, the stochastic differential equation is 
\begin{align*}
d\overline{V}_t =& \frac{1}{N}\sum_{i=1}^{N} dV_{i,t}  \notag \\
= & -\frac{a}{N}\sum_{i=1}^{N}\sum_{j=1,j\neq
	i}^{N}\alpha_{ij}(y_{i,t}-y_{j,t})^T e_{i,t}\tilde{\gamma}dt+ \frac{1}{N}\sum_{i=1}^{N}\sum_{j=1,j\neq i}^{N}\alpha_{ij}g_{r}(\Vert
y_{i,t}-y_{j,t}\Vert )(y_{i,t}-y_{j,t})^{T}e_{i,t}\tilde{\gamma}dt \notag\\
&-\frac{1}{N}%
\sum_{i=1}^N \nabla^T f(y_{i,t}) e_{i,t} \tilde{\gamma}dt  + \frac{\tau}{N} \tilde{\gamma}\sum_{i=1}^N {dB_{i,t}}^T e_{i,t} +\frac{m\tau^2 \tilde{\gamma}^2(N-1)%
}{2N}dt \notag \\
= & -\frac{a}{N}\sum_{i=1}^{N}\sum_{j=1,j\neq
	i}^{N}\alpha_{ij}(e_{i,t}-e_{j,t})^T e_{i,t}\tilde{\gamma}dt+ \frac{1}{2N}\sum_{i=1}^{N}\sum_{j=1,j\neq i}^{N}\alpha_{ij}g_{r}(\Vert
y_{i,t}-y_{j,t}\Vert )\Vert y_{i,t}-y_{j,t} \Vert ^2 \tilde{\gamma}dt\notag\\
&-\frac{1}{N}%
\sum_{i=1}^N \nabla^T f(y_{i,t}) e_{i,t} \tilde{\gamma}dt  + \frac{\tau}{N} \tilde{\gamma}\sum_{i=1}^N {dB_{i,t}}^T e_{i,t}+\frac{m\tau^2 \tilde{\gamma}^2(N-1)%
}{2N}dt. 
\end{align*}

\subsection{Proof of Theorem \ref{opt thm1}}
\label{opt App:thm1}

\subsubsection{Case 1}
By equation (\ref{opt eq:doV}), 
\begin{align*} 
d\overline{V}_t
= & -\frac{a}{N}\sum_{i=1}^{N}\sum_{j=1,j\neq
	i}^{N}\alpha_{ij}(e_{i,t}-e_{j,t})^T e_{i,t}\tilde{\gamma}dt+ \frac{1}{2N}\sum_{i=1}^{N}\sum_{j=1,j\neq i}^{N}\alpha_{ij}g_{r}(\Vert
y_{i,t}-y_{j,t}\Vert )\Vert y_{i,t}-y_{j,t} \Vert ^2 \tilde{\gamma}dt \notag\\
& -\frac{1}{N}%
\sum_{i=1}^N \nabla^T f(y_{i,t}) e_{i,t} \tilde{\gamma}dt + \frac{\tau}{N} \tilde{\gamma}\sum_{i=1}^N {dB_{i,t}}^T e_{i,t}+\frac{m\tau^2 \tilde{\gamma}^2(N-1)%
}{2N}dt. 
\end{align*}
Noticing that $g_{r}(\Vert
y_{i,t}-y_{j,t}\Vert )\Vert y_{i,t}-y_{j,t} \Vert ^2\le b$, by (\ref{opt eq: sum1}) and (\ref{opt eq: sum2}),
\begin{align*}
d\overline{V}_t
\le & -2{a}\lambda_2\tilde{\gamma}\overline{V}_t dt+\frac{b | Tr(L) |}{2N}\tilde{\gamma}dt+\frac{m\tau^2\tilde{\gamma}^2(N-1)}{2N}dt +\frac{1}{N}\sum_{i=1}^N \eta \|e_{i,t}\| \tilde{\gamma}dt+\frac{\tau}{N}\tilde{\gamma} \sum_{i=1}^N {%
	dB_{i,t}}^T e_{i,t}  \notag \\
\le & -2{a}\lambda_2\tilde{\gamma}\overline{V}_t dt+ \left[\frac{b | Tr(L) |}{2N}\tilde{\gamma}+\frac{m\tau^2 \tilde{\gamma}^2(N-1)}{2N}%
\right]dt+\sqrt{2}{\eta}\sqrt{\overline{V}_t}\tilde{\gamma}dt+\frac{\tau}{N}\tilde{\gamma} \sum_{i=1}^N {%
	dB_{i,t}}^T e_{i,t}  \notag \\
\le & -c_1\tilde{\gamma}\overline{V}_t dt-(2a\lambda_2-c_1)\tilde{\gamma}\overline{V}_t dt+\sqrt{2}{\eta}\sqrt{%
	\overline{V}_t}\tilde{\gamma}dt  +\left[\frac{b | Tr(L) |}{2N}\tilde{\gamma}+\frac{m\tau^2\tilde{\gamma}^2(N-1)}{2N}\right]dt 
\notag \\
& +\frac{\tau}{N}\tilde{\gamma} \sum_{i=1}^N {dB_{i,t}}^T e_{i,t}  \notag \\
= & -c_1\tilde{\gamma}\overline{V}_t dt-(2a\lambda_2-c_1)\left[\sqrt{\overline{V}}-\frac{\sqrt{2}{%
		\eta}}{2(2a\lambda_2-c_1)}\right]^2\tilde{\gamma}dt   \\
&+\left[\frac{\eta^2}{2(2a\lambda_2-c_1)}\tilde{\gamma}+\frac{b | Tr(L) |}{2N}\tilde{\gamma}+\frac{m\tau^2\tilde{\gamma}^2(N-1)}{2N}%
\right]dt +\frac{\tau}{N}\tilde{\gamma} \sum_{i=1}^N {dB_{i,t}}^T e_{i,t}  \notag \\
\le & -c_1\tilde{\gamma}\overline{V}_t dt+c_2\tilde{\gamma}dt+\frac{\tau}{N}\tilde{\gamma} \sum_{i=1}^N {dB_{i,t}}^T
e_{i,t},  \notag \\
\end{align*}
where $c_1\in \left(0,2a\lambda_2\right)$ is arbitrary, and
\begin{align*}
c_2=\frac{\eta^2}{2(2a\lambda_2-c_1)}+\frac{b | Tr(L) |}{2N}+\frac{m\tau^2\tilde{\gamma}(N-1)}{2N}.
\end{align*}
Applying Ito's lemma to $e^{c_1\tilde{\gamma} t}\overline{V}_t$, 
\begin{align*}
d(e^{c_1\tilde{\gamma} t}\overline{V}_t)=  e^{c_1\tilde{\gamma} t}d \overline{V}_t+c_1\tilde{\gamma}e^{c_1\tilde{\gamma} t}%
\overline{V}_t dt 
\le c_2\tilde{\gamma}e^{c_1\tilde{\gamma} t}dt+ \frac{\tau}{N} \tilde{\gamma}e^{c_1\tilde{\gamma} t} \sum_{i=1}^N {dB_{i,t}}^T
e_{i,t}.
\end{align*}
Integrating the stochastic differential inequality, 
\begin{align*}
\overline{V}_t \le  e^{-c_1\tilde{\gamma} t}\overline{V}_0+\frac{c_2}{c_1}(1-e^{-c_1\tilde{\gamma}
	t})
+e^{-c_1\tilde{\gamma} t}\int_{0}^{t}\frac{\tau}{N} \tilde{\gamma}e^{c_1\tilde{\gamma} s} \sum_{i=1}^N {dB_{i,s}}^T
e_{i,s}.
\end{align*}
Taking an ensemble average on both sides yields 
\begin{align}
\mathbb{E}[\overline{V}_t ] \le e^{-c_1\tilde{\gamma} t}\overline{V}_0+\frac{c_2}{c_1}%
(1-e^{-c_1\tilde{\gamma} t}).
\end{align}
It follows that 
\begin{align*}
\mathbb{E}[\overline{V}_t ] \le (\overline{V}_0-\frac{c_2}{c_1})e^{-c_1\tilde{\gamma}
	t}+\frac{c_2}{c_1} 
\le \left\{ 
\begin{array}{lcc}
\overline{V}_0 \ \  & \text{if} \ \ \overline{V}_0\ge \cfrac{c_2}{c_1}, & 
\\ 
\cfrac{c_2}{c_1} \ \  & \text{if} \ \ \overline{V}_0< \cfrac{c_2}{c_1}. & 
\end{array}
\right.
\end{align*}
Therefore 
\begin{align}
\mathbb{E}[\overline{V}_t ] \le \max \{\overline{V}_0,\frac{c_2}{c_1}%
\},\forall t,
\end{align}
In the long run, 
\begin{align}  \label{opt neq: oV c1c2}
\mathbb{E}[\overline{V}_t ] \le \frac{c_2}{c_1}=\frac{\eta^2}{%
	2c_1(2a\lambda_2-c_1)}+\frac{b | Tr(L) |}{2c_1 N}+\frac{m\tau^2\tilde{\gamma}(N-1)}{2c_1 N}.
\end{align}
Notice that the above inequality is valid for all $c_1\in (0,2a\lambda_2)$, of which
we look for the minimum over all possible $c_1$'s. Define 
\begin{multline*}
\psi_1(c_1)= \frac{\eta^2}{%
	2c_1(2a\lambda_2-c_1)}+\frac{b | Tr(L) |}{2c_1 N}+\frac{m\tau^2\tilde{\gamma}(N-1)}{2c_1 N}  \\
= \left[\frac{\eta^2}{4a\lambda_2}+\frac{b | Tr(L) |}{2N}+\frac{m\tau^2\tilde{\gamma}(N-1)}{2N} \right]%
\frac{1}{c_1}+\frac{\eta^2}{4a\lambda_2}\frac{1}{(2a\lambda_2-c_1)}.
\end{multline*}
By Cauchy-Schwartz inequality, when 
\begin{align*}
\frac{1}{c_1}=\frac{1}{2a\lambda_2}\left(1+\sqrt{\frac{c_4}{c_3}}\right),
\end{align*}
$\psi_1(c_1)$ attains its minimum 
\begin{align*}
\psi_1^*=\frac{1}{2a\lambda_2}{\left(\sqrt{c_3}+\sqrt{c_4}\right)^2}.
\end{align*}
Here 
\begin{align*}
c_3=\frac{\eta^2}{4a\lambda_2}+\frac{b | Tr(L) |}{2N}+\frac{m\tau^2\tilde{\gamma}(N-1)}{2N}, c_4=\frac{%
	\eta^2}{4a\lambda_2}.
\end{align*}
Since (\ref{opt neq: oV c1c2}) is valid for all $c_1\in (0,2a\lambda_2)$, it holds true
that 
$
\mathbb{E}[\overline{V}_t ] \le \psi_1^*
$
in the long run.

\subsubsection{Case 2}
%
Notice that 
\begin{align*}
\sum_{i=1}^N \nabla^T f(y_{i,t}) e_{i,t} = \sum_{i=1}^N (\nabla^T
f(y_{i,t})-\nabla^T f(\overline{x}_t)) (y_{i,t}-\overline{x}_t)
\geq \sum_{i=1}^N \kappa \|y_{i,t}-\overline{x}_t\|^2 = 2\kappa N\overline{%
	V}_t,
\end{align*}
where the inequality follows from Assumption \ref{opt asp:gradient_strconvexity}, and that $g_{r}(\Vert
y_{i,t}-y_{j,t}\Vert )\Vert y_{i,t}-y_{j,t} \Vert ^2\le b$. In light of (\ref{opt eq: sum1}) and (\ref{opt eq: sum2}), equation (\ref{opt eq:doV}) gives
\begin{align*} 
d\overline{V}_t \le -2{a}\lambda_2\tilde{\gamma}\overline{V}_tdt-2\kappa \tilde{\gamma}\overline{V}_t dt+\frac{b | Tr(L) |}{2N}\tilde{\gamma}dt+\frac{m\tau^2\tilde{\gamma}^2 (N-1)}{2N}dt+\frac{\tau}{N}\tilde{\gamma} \sum_{i=1}^N {%
	dB_{i,t}}^T e_{i,t}.
\end{align*}
By Ito's lemma, 
\begin{align*}
d\left[ e^{2(\kappa+a\lambda_2)\tilde{\gamma}t}\oV_t \right] & =e^{2(\kappa+a\lambda_2)\tilde{\gamma}t}d \oV_t+2(\kappa+a\lambda_2)\tilde{\gamma}e^{2(\kappa+a\lambda_2)\tilde{\gamma}t}\oV_t dt\\
& \le \left[\frac{b | Tr(L) |}{2N}\tilde{\gamma}dt+\frac{m\tau^2\tilde{\gamma}^2 (N-1)}{2N}\right]e^{2(\kappa+a\lambda_2)\tilde{\gamma}t}dt+\frac{\tau}{N}\tilde{\gamma}e^{2(\kappa+a\lambda_2)\tilde{\gamma}t} \sum_{i=1}^N {dB_{i,t}}^T e_{i,t} .
\end{align*}
Integrating both sides,
\begin{multline*}
\oV_t \le  e^{-2(\kappa+a\lambda_2)\tilde{\gamma}t}\oV_0+\left[\frac{b | Tr(L) |}{4N(\kappa+a\lambda_2)}+\frac{m\tau^2\tilde{\gamma} (N-1)}{4N(\kappa+a\lambda_2)}\right]\left[1-e^{-2(\kappa+a\lambda_2)\tilde{\gamma}t}\right]\\
+e^{-2(\kappa+a\lambda_2)\tilde{\gamma}t}\int\limits_{0}^{t}\frac{\tau}{N}\tilde{\gamma}e^{2(\kappa+a\lambda_2)\tilde{\gamma}s} \sum_{i=1}^N {dB_{i,s}^T} e_{i,s}.
\end{multline*}
Taking ensemble average yields
\begin{align*}
\mathbb{E}[\oV_t] \le e^{-2(\kappa+a\lambda_2)\tilde{\gamma}t}\oV_0+\left[\frac{b | Tr(L) |}{4N(\kappa+a\lambda_2)}+\frac{m\tau^2\tilde{\gamma} (N-1)}{4N(\kappa+a\lambda_2)}\right]\left[1-e^{-2(\kappa+a\lambda_2)\tilde{\gamma}t}\right].
\end{align*}
In the long run,
\begin{align*}
\mathbb{E}[\overline{V}_t] \le \psi_2^* = \frac{b | Tr(L) |}{4N(\kappa+a\lambda_2)}+\frac{m\tau^2\tilde{\gamma} (N-1)}{4N(\kappa+a\lambda_2)}.
\end{align*}

\subsection{Proof of Lemma \ref{opt thm2}}
\label{opt App:thm2}
\subsubsection{Preliminaries}

According to equation (\ref{opt eq:macro}) and Ito's lemma, 
\begin{align}  \label{opt eq:dU}
d{U}_t = & d({\overline{y}_t-x^{\ast}})\cdot (\overline{y}_t-x^{\ast})+\frac{%
	1}{2}d(\overline{y}_t-x^{\ast})\cdot d(\overline{y}_t-x^{\ast})  \notag \\
= & -\frac{1}{N}\sum_{i=1}^{N}\nabla^T f(y_{i,t})(\overline{y}_t-x^{\ast})\tilde{\gamma}
dt +\frac{\tau}{N} \tilde{\gamma}\sum_{i=1}^N {dB_{i,t}}^T (\overline{y}_t-x^{\ast})+\frac{m\tau^2\tilde{\gamma}^2%
}{2N}dt  \notag \\
= & -\frac{1}{N}\sum_{i=1}^N \nabla^T f(y_{i,t}) (y_{i,t}-x^{\ast}) \tilde{\gamma}dt  +\frac{1}{N}\sum_{i=1}^N \nabla^T f(y_{i,t}) e_{i,t} \tilde{\gamma}dt+\frac{\tau}{N} \tilde{\gamma}\sum_{i=1}^N 
{dB_{i,t}}^T (\overline{y}_t-x^{\ast})+\frac{m\tau^2\tilde{\gamma}^2}{2N}dt.
\end{align}
Since $f(\cdot)$ attains its minimum at $x^{\ast}$, $\nabla f(x^{\ast})=0$.
Then by Assumption \ref{opt asp:gradient_strconvexity} and equation (\ref{opt eq: FUV}), 
\begin{align*}
-\frac{1}{N}\sum_{i=1}^N \nabla^T f(y_{i,t}) (y_{i,t}-x^{\ast})=&-\frac{1}{N}%
\sum_{i=1}^N (\nabla^T f(y_{i,t})-\nabla^T f(x^{\ast})) (y_{i,t}-x^{\ast})\\
\leq &-2\kappa \overline{F}_t
= -2\kappa(U_t+\overline{V}_t).
\end{align*}
By Assumption \ref{opt asp:Lipschitz}, 
\begin{align*}
\frac{1}{N}\sum_{i=1}^N \nabla^T f(y_{i,t}) e_{i,t}=\frac{1}{N}\sum_{i=1}^N
\left(\nabla^T f(y_{i,t})-\nabla^T f(\overline{x}_t)\right) (y_{i,t}-%
\overline{x}_t)\leq \frac{1}{N}\sum_{i=1}^N \mu \|e_{i,t}\|^2= 2\mu\overline{V%
}_t.
\end{align*}
Therefore, 
\begin{align}  \label{opt neq: dU}
dU_t & \leq -2\kappa\tilde{\gamma}(U_t+\overline{V}_t)dt+2\mu\tilde{\gamma}\overline{V}_tdt+\frac{%
	m\tau^2\tilde{\gamma}^2}{2N}dt+\frac{\tau}{N} \tilde{\gamma}\sum_{i=1}^N {dB_{i,t}}^T \overline{x}_t 
\notag \\
& = -2\kappa\tilde{\gamma} U_t dt+2(\mu-\kappa)\tilde{\gamma}\overline{V}_t dt+\frac{m\tau^2\tilde{\gamma}^2}{2N}dt+%
\frac{\tau}{N}\tilde{\gamma} \sum_{i=1}^N {dB_{i,t}}^T \overline{x}_t.
\end{align}

%
By equation (\ref{opt eq:doV_simp}), 
\begin{multline} 
d\overline{V}_t 
=  -\frac{a}{N}\sum_{i=1}^{N}\sum_{j=1,j\neq
	i}^{N}\alpha_{ij}(e_{i,t}-e_{j,t})^T e_{i,t}\tilde{\gamma}dt-\frac{1}{N}%
\sum_{i=1}^N \nabla^T f(y_{i,t}) e_{i,t} \tilde{\gamma}dt + \frac{\tau}{N} \tilde{\gamma}\sum_{i=1}^N {dB_{i,t}}^T e_{i,t}\\
+\frac{m\tau^2\tilde{\gamma}^2(N-1)%
}{2N}dt.
\end{multline}
Notice that 
\begin{align*}
\sum_{i=1}^N \nabla^T f(y_{i,t}) e_{i,t} = \sum_{i=1}^N (\nabla^T
f(y_{i,t})-\nabla^T f(\overline{x}_t)) (y_{i,t}-\overline{x}_t)
\geq \sum_{i=1}^N \kappa \|y_{i,t}-\overline{x}_t\|^2 = 2\kappa N\overline{%
	V}_t,
\end{align*}
where the inequality follows from Assumption \ref{opt asp:gradient_strconvexity}%
. In light of (\ref{opt eq: sum1}) and (\ref{opt eq: sum2}), 
\begin{align} 
\label{opt neq:doV}
d\overline{V}_t \le -2{a}\lambda_2\tilde{\gamma}\overline{V}_tdt-2\kappa \tilde{\gamma}\overline{V}_t dt+\frac{m\tau^2\tilde{\gamma}^2 (N-1)}{2N}dt+\frac{\tau}{N}\tilde{\gamma} \sum_{i=1}^N {%
	dB_{i,t}}^T e_{i,t}.
\end{align}

\subsubsection{Proof of Lemma  \ref{opt thm2}}

Define 
\begin{align}  \label{opt def:W}
W_t=U_t+\frac{(\mu-\kappa)}{a\lambda_2}\overline{V}_t.
\end{align}
By (\ref{opt neq: dU}) and (\ref{opt neq:doV}), 
\begin{align*}
dW_t \leq & -2\kappa\tilde{\gamma} U_t dt+\left[2(\mu-\kappa)-\frac{(\mu-\kappa)}{a\lambda_2}%
(2\kappa+2a\lambda_2)\right]\tilde{\gamma}\overline{V}_t dt+\frac{m\tau^2\tilde{\gamma}^2}{2N}dt+\frac{m\tau^2\tilde{\gamma}^2(\mu-\kappa)(N-1)}{2aN\lambda_2}dt\\ 
&
+\frac{\tau}{N}\tilde{\gamma} \sum_{i=1}^N {dB_{i,t}}^T \overline{x}_t+\frac{%
	\tau(\mu-\kappa)}{aN\lambda_2}\tilde{\gamma}\sum_{i=1}^N {dB_{i,t}}^T e_{i,t} \\
= & -2\kappa\tilde{\gamma} W_t dt +\frac{m\tau^2\tilde{\gamma}^2}{2N}\left[1+\frac{(\mu-\kappa)(N-1)}{a\lambda_2}%
\right]dt+\frac{\tau}{N}\tilde{\gamma}
\sum_{i=1}^N {dB_{i,t}}^T \overline{x}_t +\frac{%
	\tau(\mu-\kappa)}{aN\lambda_2}\tilde{\gamma} \sum_{i=1}^N {dB_{i,t}}^T e_{i,t}.
\end{align*}
Then, 
\begin{align*}
d(e^{2\kappa\tilde{\gamma} t}W_t)&=  e^{2\kappa\tilde{\gamma} t}dW_t+2\kappa\tilde{\gamma} e^{2\kappa\tilde{\gamma} t}W_t dt \\
&\leq  \frac{m\tau^2\tilde{\gamma}^2}{2N}\left[1+\frac{(\mu-\kappa)(N-1)}{a\lambda_2}%
\right]e^{2\kappa\tilde{\gamma} t}dt +\frac{\tau}{N}\tilde{\gamma} e^{2\kappa\tilde{\gamma} t}\left[\sum_{i=1}^N {dB_{i,t}}^T \overline{x}%
_t+\frac{(\mu-\kappa)}{a\lambda_2} \sum_{i=1}^N {dB_{i,t}}^T e_{i,t}\right].
\end{align*}
Integrating both sides yields 
\begin{multline*}
W_t \leq  e^{-2\kappa\tilde{\gamma} t}W_0+\frac{m\tau^2\tilde{\gamma}}{4\kappa N}\left[1+\frac{(\mu-\kappa)(N-1)}{a\lambda_2}%
\right]
(1-e^{-2\kappa\tilde{\gamma} t}) \\
+e^{-2\kappa\tilde{\gamma} t}\int_{0}^{t}\frac{\tau}{N}\tilde{\gamma} e^{2\kappa\tilde{\gamma} s}\left[%
\sum_{i=1}^N {dB_{i,s}}^T \overline{x}_s+\frac{(\mu-\kappa)}{a\lambda_2}
\sum_{i=1}^N {dB_{i,s}}^T e_{i,s}\right].
\end{multline*}
Taking ensemble average, we get 
\begin{align*}
\mathbb{E}[W_t] \leq e^{-2\kappa\tilde{\gamma} t}W_0+\frac{m\tau^2\tilde{\gamma}}{4\kappa N}\left[1+\frac{(\mu-\kappa)(N-1)}{a\lambda_2}%
\right](1-e^{-2\kappa\tilde{\gamma} t}).
\end{align*}
By (\ref{opt def:W}),
\begin{align*}
\mathbb{E}[U_t] \leq \mathbb{E}[ W_t]
\leq e^{-2\kappa\tilde{\gamma} t}\left[U_0+\frac{(\mu-\kappa)}{a\lambda_2}\overline{V}_0\right] +\frac{m\tau^2\tilde{\gamma}}{4\kappa N}\left[1+\frac{(\mu-\kappa)(N-1)}{a\lambda_2}%
\right](1-e^{-2\kappa\tilde{\gamma} t}).
\end{align*}
In the long run, 
\begin{align}
\mathbb{E}[U_t]  \leq \frac{m\tau^2\tilde{\gamma}}{4\kappa N}\left[1+\frac{(\mu-\kappa)(N-1)}{a\lambda_2}%
\right].
\end{align}

\subsection{Proof of Proposition \ref{opt prop 1}}
\label{opt App:prop1}
By (\ref{opt neq: EUx}) and (\ref{opt neq: EG lowerx}),
\begin{equation*}
\frac{1}{2}\mathbb{E}[\Vert x_{t}-x^{\ast }\Vert ^{2}]  \geq e^{-2\mu\gamma\Gamma t}G_{0}+\frac{m\tau _{N}^{2}\gamma}{4\mu}%
(1-e^{-2\mu\gamma\Gamma t}), 
\end{equation*}%
and
\begin{align*}
\frac{1}{2}\mathbb{E}[\Vert \bar{x}_{t}-x^{\ast }\Vert ^{2}]   \leq e^{-2\kappa\tilde{\gamma}\tilde{\Gamma} t}\left[U_0+\frac{(\mu-\kappa)}{a\lambda_2}\overline{V}_0\right] +\frac{m\tau^2\tilde{\gamma}}{4\kappa N}\left[1+\frac{(\mu-\kappa)(N-1)}{a\lambda_2}%
\right](1-e^{-2\kappa\tilde{\gamma}\tilde{\Gamma} t}).
\end{align*}
Let \[d_1=\frac{m\tau _{N}^{2}\gamma}{4\mu},\ \ d_2=U_0+\frac{(\mu-\kappa)}{a\lambda_2}\overline{V}_0,\]
and
\[d_3=\frac{m\tau^2\tilde{\gamma}}{4\kappa N}\left[1+\frac{(\mu-\kappa)(N-1)}{a\lambda_2}%
\right].\]
It follows that
\begin{equation*}
\mathbb{E}[\Vert x_{t}-x^{\ast }\Vert ^{2}]\geq 2(G_{0}-d_1)e^{-2\mu\gamma \Gamma t}+2d_1,
\end{equation*}%
and
\begin{align*}
\mathbb{E}[\Vert \bar{x}_{t}-x^{\ast }\Vert ^{2}]  \leq 2(d_2-d_3)e^{-2\kappa\tilde{\gamma}\tilde{\Gamma} t} +2d_3.
\end{align*}
From the discussion in Section \ref{opt subsec: real}, $d_1\sim 1/N^{(1/\beta)-1}$ and $d_3\sim 1/N$. Therefore,
there exists $N^*$ such that when $N>N^*$, $d_3<d_1$. In this case, let $t^*$ be such that
\[d_1-d_3=|G_0-d_1|e^{-2\mu\gamma\Gamma t^*}+|d_2-d_3|e^{-2\kappa \tilde{\gamma}\tilde{\Gamma} t^*}.\]
Then for all $t>t^*$,
\begin{align*}
\mathbb{E}[\Vert \bar{x}_{t}-x^{\ast }\Vert ^{2}] < \mathbb{E}[\Vert x_{t}-x^{\ast }\Vert ^{2}].
\end{align*}

\subsection{Proof of Theorem \ref{opt Thm: nonconvex}}
\label{opt App:thm3}

We start by looking for the joint density of $(\oy^T,e_1^T,\dots,e_{N-1}^T)$. Notice that $(\oy^T,e_1^T,\dots,e_{N-1}^T)=(D\otimes I_m)\my_t$, where $D$ is a $N\times N$ matrix. It is easy to verify that $D$ has full rank, so that $(D\otimes I_m)^{-1}$ exists.  It follows that (see \cite{jacod2003probability})
\begin{align*}
\pi(\oy^T,e_1^T,\dots,e_{N-1}^T) & =\frac{1}{\det(D\otimes I_m)}\pi((D\otimes I_m)^{-1}(\oy^T,e_1^T,\dots,e_{N-1}^T))\\
& = \frac{1}{\tilde{K}}\exp\left\{-2\left[\sum_{i=1}^{N}f(\oy+e_i)+\frac{1}{2}\sum_{i=1}^{N}\sum_{j=1,j\neq i}^N \alpha_{ij} J(\|e_{i,t}-e_{j,t}\|)\right]/(\tau^2\tilde{\gamma})\right\}.
\end{align*}
where $\tilde{K}$ is a normalizing factor.
The density of $\oy$ is calculated as 
\begin{align*}
\pi(\oy)=\frac{1}{\tilde{K}}\int \exp\left\{-2\left[\sum_{i=1}^{N}f(\oy+e_i)+\frac{1}{2}\sum_{i=1}^{N}\sum_{j=1,j\neq i}^N \alpha_{ij} J(\|e_{i,t}-e_{j,t}\|)\right]/(\tau^2\tilde{\gamma})\right\}de_1\cdots de_{N-1}.
\end{align*}
Notice that $e_N=-\sum_{i=1}^{N-1}e_i$ in the equation above.
We rewrite the potential function $J$ as a sum of two parts: $J=J_a+J_r$, where 
$\nabla_x J_a(\|x\|)=x g_a(\|x\|)=ax$, and $\nabla_x J_r(\|x\|)=xg_r(\|x\|)$.
Without loss of generality, we assume that
$ J_a(\|x\|)=({1}/{2})a\|x\|^2$.
Then (refer to Godsil and Royle \cite{godsil2013algebraic})
\begin{align*}
\pi(\oy)
& =\frac{1}{\tilde{K}}\int \exp\left\{-2\left[\sum_{i=1}^{N}f(\oy+e_i)+\frac{1}{2}\sum_{i=1}^{N}\sum_{j=1,j\neq i}^N \alpha_{ij}\frac{a}{2}\|e_i-e_j\|^2+R\right]/(\tau^2\tilde{\gamma})\right\}de_1\cdots de_{N-1}\\
&  =\frac{1}{\tilde{K}}\int \exp\left\{-2\left[\sum_{i=1}^{N}f(\oy+e_i)+\frac{a}{2}\me^T(L\otimes I_m)\me+R\right]/(\tau^2\tilde{\gamma})\right\}de_1\cdots de_{N-1},
\end{align*}
where
\[R=\frac{1}{2}\sum_{i=1}^{N}\sum_{j=1,j\neq i}^N \alpha_{ij} J_r(\|e_{i,t}-e_{j,t}\|).\]
Let $z_i=a^{{1}/{2}}e_i, \forall i$.
\begin{multline*}
 \int \exp\left\{-2\left[\sum_{i=1}^{N}f(\oy+e_i)+\frac{a}{2}\me^T(L\otimes I_m)\me+R\right]/(\tau^2\tilde{\gamma})\right\}de_1\cdots de_{N-1}\\
=  a^{-(N-1)/2}\int \exp\left\{-2\left[\sum_{i=1}^{N}f(\oy+a^{-{1}/{2}}z_i)+\frac{1}{2}\mz^T(L\otimes I_m)\mz+\tilde{R}\right]/(\tau^2\tilde{\gamma})\right\}dz_1\cdots dz_{N-1}.
\end{multline*}
Here 
\[\tilde{R}=\frac{1}{2}\sum_{i=1}^{N}\sum_{j=1,j\neq i}^N \alpha_{ij} J_r(a^{-1/2}\|z_{i,t}-z_{j,t}\|).\]
Given that graph $\mathcal{G}$ is connected, 
$
\mz^T(L\otimes I_m)\mz\ge \lambda_2\me^T \me
$
with $\lambda_2>0$. Then since $f$ and $J_r$ are continuous, we have
\begin{multline*}
 \lim\limits_{a\rightarrow \infty}\int \exp\left\{-2\left[\sum_{i=1}^{N}f(\oy+a^{-{1}/{2}}z_i)+\frac{1}{2}\mz^T(L\otimes I_m)\mz+\tilde{R}\right]/(\tau^2\tilde{\gamma})\right\}dz_1\cdots dz_{N-1}\\
= \int \exp\left\{-2\left[Nf(\oy)+\frac{1}{2}\mz^T(L\otimes I_m)\mz+\tilde{R}_0 \right]/(\tau^2\tilde{\gamma})\right\}dz_1\cdots dz_{N-1}\\
= \exp\{-2Nf(\oy)/(\tau^2\tilde{\gamma})\}\exp\{-2\tilde{R}_0/(\tau^2\tilde{\gamma})\}\int \exp\left\{\mz^T(L\otimes I_m)\mz/(\tau^2\tilde{\gamma})\right\}dz_1\cdots dz_{N-1},
\end{multline*}
where 
\[\tilde{R}_0=\frac{1}{2}\sum_{i=1}^{N}\sum_{j=1,j\neq i}^N \alpha_{ij} J_r(0)=\frac{1}{2}|Tr(L)|J_r(0).\]
Therefore,
\begin{align*}
&\lim\limits_{a\rightarrow \infty}\pi(\oy) \\
&=  \lim\limits_{a\rightarrow \infty}\frac{1}{\tilde{K}}\int\exp\left\{-2\left[\sum_{i=1}^{N}f(\oy+e_i)+\frac{1}{2}\sum_{i=1}^{N}\sum_{j=1,j\neq i}^N \alpha_{ij} J(\|e_{i,t}-e_{j,t}\|)\right]/(\tau^2\tilde{\gamma})\right\}de_1\cdots de_{N-1}\\
&= \frac{\exp\{-2Nf(\oy)/(\tau^2\tilde{\gamma})\}\exp\{-2\tilde{R}_0/\tau^2\}\int \exp\left\{\mz^T(L\otimes I_m)\mz/(\tau^2\tilde{\gamma})\right\}dz_1\cdots dz_{N-1}}{\int \exp\{-2Nf(\oy)/(\tau^2\tilde{\gamma})\}\exp\{-2\tilde{R}_0/(\tau^2\tilde{\gamma})\}\int \exp\left\{\mz^T(L\otimes I_m)\mz/(\tau^2\tilde{\gamma})\right\}dz_1\cdots dz_{N-1}d\oy}\\
&= \frac{\exp\{-2Nf(\oy)/(\tau^2\tilde{\gamma})\}}{\int \exp\{-2Nf(\oy)/(\tau^2\tilde{\gamma})\}d\oy}\\
& =\frac{\exp\left\{-2Nf(\oy)/(\sigma^2\tilde{\Gamma})\right\}}{\int \exp\left\{-2Nf(\oy)/(\sigma^2\tilde{\Gamma})\right\}d\oy}.
\end{align*}
This completes the proof.

\bibliographystyle{plain}
\bibliography{mybib} 
\end{document}